\documentclass[reqno]{amsart}
\usepackage{amssymb}
\usepackage[mathcal]{euscript}
\usepackage[cmtip,all]{xy}
\usepackage{amsmath}\usepackage{amsfonts}
\usepackage{graphicx} 
\usepackage{verbatim} 
\usepackage{tikz}
\usepackage{hyperref} 
\usepackage[left=20mm, right=20mm, top=20mm, bottom=20mm]{geometry} 
\usepackage{color} 
\newcommand{\longhookrightarrow}{\lhook\joinrel\longrightarrow}
\newcommand{\bi}{\textnormal{\textbf{i}}} 
  
\newcommand{\cZ}{\mathcal{Z}} \newcommand{\FF}{\mathbb{F}} \newcommand{\KK}{\mathbb{K}}
\newcommand{\ZZ}{\mathbb{Z}} \newcommand{\NN}{\mathbb{N}} 
\newcommand{\cT}{\mathcal{T}} \newcommand{\cO}{\mathcal{O}} \newcommand{\cV}{\mathcal{V}}
\newcommand{\cW}{\mathcal{W}} \newcommand{\cH}{\mathcal{H}} \newcommand{\cS}{\mathcal{S}} 
\newcommand{\med}{\;|\;} 
 \DeclareMathOperator{\Aut}{\mathrm{Aut}}
 \DeclareMathOperator{\Stab}{\mathrm{Stab}}
 
\DeclareMathOperator{\End}{\mathrm{End}} 
\DeclareMathOperator{\Str}{\mathrm{Str}} \DeclareMathOperator{\bAut}{\mathbf{Aut}}
\DeclareMathOperator{\bPGL}{\mathbf{PGL}} 
\DeclareMathOperator{\bmu}{\boldsymbol{\mu}} 
\DeclareMathOperator{\bGL}{\mathbf{GL}}
\DeclareMathOperator{\Sim}{\mathrm{Sim}} \DeclareMathOperator{\bSim}{\mathbf{Sim}}
\DeclareMathOperator{\Iso}{\mathrm{Iso}} \DeclareMathOperator{\bIso}{\mathbf{Iso}}
\DeclareMathOperator{\Ort}{\mathrm{O}} \DeclareMathOperator{\bOrt}{\mathbf{O}} 
\DeclareMathOperator{\GO}{\mathrm{GO}} \DeclareMathOperator{\bGO}{\mathbf{GO}}
\DeclareMathOperator{\Sp}{\mathrm{Sp}} \DeclareMathOperator{\bSp}{\mathbf{Sp}}
\DeclareMathOperator{\GSp}{\mathrm{GSp}} \DeclareMathOperator{\bGSp}{\mathbf{GSp}}
\DeclareMathOperator{\Uni}{\mathrm{U}} \DeclareMathOperator{\bUni}{\mathbf{U}}
\DeclareMathOperator{\GU}{\mathrm{GU}} \DeclareMathOperator{\bGU}{\mathbf{GU}}
\newcommand{\cR}{\mathcal{R}}
\DeclareMathOperator{\chr}{\mathrm{char}\,} 
\DeclareMathOperator{\lspan}{\mathrm{span}}

\newcommand{\GL}{\mathrm{GL}} 
  \newcommand{\cM}{\mathcal{M}}
\newcommand{\cA}{\mathcal{A}} \newcommand{\cL}{\mathcal{L}}
\newcommand{\inv}{{}^-} \newcommand{\kan}{\mathfrak{K}}
 \DeclareMathOperator{\id}{\mathrm{id}}

 \DeclareMathOperator{\rank}{\textnormal{rank}}

\newcommand{\bG}{\mathbf{G}}

\newcommand{\Tr}{\mathsf{t}} 
\newcommand{\s}{\mathsf{s}} 
\newcommand{\ex}{\mathrm{ex}} 
\DeclareMathOperator{\tr}{\textnormal{t}} 
\DeclareMathOperator{\diag}{\mathrm{diag}} 

\DeclareMathOperator{\GJP}{\mathbf{GJP}} 
\DeclareMathOperator{\Ad}{\mathrm{Ad}}
\newcommand{\VI}{\cV^{\textnormal{(I)}}}
\newcommand{\VII}{\cV^{\textnormal{(II)}}}
\newcommand{\VIII}{\cV^{\textnormal{(III)}}}
\newcommand{\frg}{{\mathfrak{g}}}
\newcommand{\cE}{\mathcal{E}}
\newtheorem{theorem}{Theorem}
\newtheorem{proposition}[theorem]{Proposition}
\newtheorem{lemma}[theorem]{Lemma}
\newtheorem{corollary}[theorem]{Corollary}
\theoremstyle{definition}
\newtheorem{df}[theorem]{Definition}
\newtheorem{example}[theorem]{Example}
\newtheorem{examples}[theorem]{Examples}
\newtheorem{notation}[theorem]{Notation}
\newtheorem{remark}[theorem]{Remark}

\numberwithin{equation}{section} 
\numberwithin{theorem}{section} 
	
\begin{document}
		
\title[Aut. group schemes of Kantor pairs of assoc. central simple struct. algebras]
	{Automorphism group schemes of Kantor pairs of \\ associative central simple structurable algebras}
		
\author[D. Aranda-Orna]{Diego Aranda-Orna}
\address{Departamento de Matem\'{a}ticas,	Universidad de Oviedo, 33007 Oviedo, Spain}
\email{diego.aranda.orna@gmail.com}
		
\author[A.S. C\'ordova-Mart\'inez]{Alejandra S. C\'ordova-Mart\'inez}
\address{Departamento de Matem\'{a}tica Aplicada, Universidad de Málaga, 29071 Malaga, Spain}
\email{sarina.cordova@gmail.com}
		
\author[A. Daza-Garc\'ia]{Alberto Daza-Garc\'ia}
\address{Departamento de Ciencias Integradas, Universidad de Huelva, 21007 Huelva, Spain}
\email{alberto.daza@dci.uhu.es}

\thanks{
MSC: 17C30, 17C50. \\
The three authors have been supported by grant PID2021-123461NB-C21, funded by MCIN/AEI/10.13039/501100011033 and by ``ERDF A way of making Europe''.}
\date{}
		
\begin{abstract}
In this work, a description of the automorphism group schemes is given for the Kantor pairs (and for some Kantor triple systems related to them) associated to associative central simple structurable algebras in the three ``split'' cases, where the base field $\FF$ is algebraically closed of characteristic different from $2$. 

For two of the three cases (the ones with orthogonal and symplectic involutions), we relate the decomposition (as a central product) of the automorphism group scheme to a decomposition (as a tensor product) of the corresponding Kantor pair regarded as a metric generalized Jordan pair.

\noindent \textbf{MSC:} 17C30, 17C50, 17B70. \\
\textbf{Keywords:} Kantor pairs, Kantor triple systems, automorphism group schemes, structurable algebras.
\end{abstract}
		
\maketitle
		
		
\section{Introduction}

In 1978, Allison defined a new class of algebras with involution called structurable algebras \cite{AliStr}. These algebras were defined over a field $\FF$ of characteristic different from $2$ and $3$, but we will consider them in the more general setting of fields of characteristic different from $2$ which appears in \cite{AF93}.
		
\begin{df}
A \emph{structurable algebra} $(\cA,\inv)$ is a unital algebra with involution such that, for all $a,b,c,d\in\cA$, the following identities hold
\begin{align}
& [V_{a,b},V_{c,d}]=V_{V_{a,b}(c),d}-V_{c,V_{b,a}(d)}, \\
& (a-\bar{a},b,c)=(b,\bar{a}-a,c),
\end{align}
where we consider the $V$-operators defined for $a,b,c\in \cA$ by
\begin{equation}
 V_{a,b}(c) := (a\bar b) c + (c\bar b) a - (c\bar a) b,
\end{equation}
and where $(a,b,c):=(ab)c-a(bc)$. The $U$-operators are defined by $U_a(b) := \{a,b,a\}$.
\end{df}

Structurable algebras are a generalization of Jordan algebras (Jordan algebras with the identity involution are structurable algebras). Recall that the TKK construction ($3$-graded Lie algebra) from a Jordan algebra was generalized in \cite{AliModels} to a TKK construction (a $5$-graded Lie algebra, also known as the Kantor construction) over fields of characteristic different from $2$ and $3$ from a structurable algebra $(\cA,\inv)$, denoted by $\mathfrak{K}(\cA,\inv)$, and over fields of characteristic $3$ in \cite{CE25}.

A classification of structurable algebras over a field $\FF$ of characteristic $0$ was completed by Allison with a missing case in \cite[Theorem 25]{AliStr}, and was corrected and extended by Smirnov to the case that $\chr \FF \neq 2,3,5$, as follows:
\begin{theorem}[{\cite[Theorem 3.8]{S90}}]
Let $(\cA,\inv)$ be a finite-dimensional central simple structurable algebra over a field $\FF$ with $\chr \FF \neq 2,3,5$. Then $(\cA,\inv)$ is isomorphic to one of the following (non-disjoint) classes of algebras:
\begin{enumerate}
\item A central simple associative algebra with involution.
\item A central simple Jordan algebra with the identity involution.
\item A structurable algebra related to an hermitian form.
\item A form of an algebra constructed from an admissible triple (these have skew-dimension $1$).
\item A form of a tensor product of two composition algebras.
\item A Smirnov algebra.
\end{enumerate}
\end{theorem}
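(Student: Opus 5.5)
The plan follows the strategy of Allison and Smirnov: reduce the classification of central simple structurable algebras to that of simple $\ZZ$-graded Lie algebras, via the Kantor construction $\mathfrak{K}(\cA,\inv)$, and then read off the algebra from the grading.

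\emph{Step 1 (decomposition by involution).} Write $\cA=\cH\oplus\cS$, where $\cH$ and $\cS$ are the hermitian and skew-hermitian parts of $\inv$. If $\cS=0$, then $\cA$ is commutative, and the defining identities force $\cA$ to be a Jordan algebra; this is class (2). So assume $\cS\neq 0$, and split according to the skew-dimension $\dim\cS$.

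\emph{Step 2 (Lie algebra side).} Form the $5$-graded Lie algebra $\mathfrak{K}(\cA,\inv)=\mathfrak{g}_{-2}\oplus\cdots\oplus\mathfrak{g}_2$, with $\mathfrak{g}_{\pm1}\cong\cA$ and $\mathfrak{g}_{\pm2}\cong\cS$. Show that central simplicity of $(\cA,\inv)$ is equivalent to central simplicity of $\mathfrak{K}(\cA,\inv)$. Then extend scalars to the algebraic closure $\overline{\FF}$. For $\chr\FF\neq2,3,5$ the simple Lie algebra is classical or exceptional (no Cartan-type algebras can carry such a grading with these dimension constraints), and $5$-gradings correspond to choices of a cocharacter, i.e.\ a weighting of simple roots by $0,1,2$ with highest root of height $2$.

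\emph{Step 3 (enumeration).} Go through the Dynkin diagrams and list all such $5$-gradings, recovering $\cA=\mathfrak{g}_1$ with its product. Type $A$ gives associative algebras with involution (1) together with the hermitian-form algebras (3). Types $B,C,D$ give the orthogonal and symplectic cases of (1) and (3). The gradings with $\dim\mathfrak{g}_2=1$ give (4). The exceptional types give tensor products of composition algebras (5), with the $35$-dimensional Smirnov algebra (6) appearing in $E_6$/$E_7$-type situations. Finally, a Galois descent argument shows that over $\FF$ one obtains forms of these, which lie in the same classes.

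\emph{Main obstacle.} Two points are hardest. The first is the equivalence in Step 2 together with the descent. The second is making sure no grading is missed in small characteristic, $p\ge7$, where the Killing form or the classification of simple Lie algebras could behave differently. For the latter, I would first show that the grading derivation is inner, so that the grading comes from a torus, and then invoke the classification of simple Lie algebras with a nondegenerate trace form. The case Allison missed, the Smirnov algebra, shows that the enumeration in Step 3 must be exhaustive rather than modelled on known examples.

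Since this is a cited classification theorem, the paper simply invokes \cite[Theorem 3.8]{S90}; the plan above only sketches how that proof goes.
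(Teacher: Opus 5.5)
The paper gives no proof of this statement: it is quoted from \cite[Theorem~3.8]{S90} as background. Your closing sentence therefore does exactly what the paper does, and there is no argument in the paper to compare your sketch against. Read as a proof outline, however, the sketch has two concrete gaps.

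\emph{Step~3: not every grading yields a structurable algebra.} Step~3 cannot ``recover $\cA=\frg_1$ with its product'' from an arbitrary weighting of the simple roots. A $5$-graded Lie algebra is the Kantor algebra of a structurable algebra only if there are $e\in\frg_1$ and $f\in\frg_{-1}$ with $\mathrm{ad}\,[e,f]$ equal to the grading derivation. This is the role of $1^\pm$ in $\kan(\cA,\inv)$, where $V_{1,1}=\id$; equivalently, the Kantor pair $(\frg_{-1},\frg_1)$ must contain an invertible element. The product and involution on $\frg_1$ are then built from such a pair $(e,f)$, so they are determined only up to isotopy.

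Many weightings fail this test. Take $\mathfrak{sl}(V_1\oplus V_0\oplus V_{-1})$ with its natural $5$-grading. Write $e=(A,B)$ and $f=(C,D)$ with $A\in\mathrm{Hom}(V_0,V_1)$, $B\in\mathrm{Hom}(V_{-1},V_0)$, $C\in\mathrm{Hom}(V_1,V_0)$ and $D\in\mathrm{Hom}(V_0,V_{-1})$. Then $[e,f]=\diag(AC,\,BD-CA,\,-DB)$. Imposing that this equals the grading element modulo scalars makes $CA$ and $BD$ scalar multiples of idempotents. A short computation in characteristic $\neq 2,3$ then shows that such $e,f$ exist only when $\dim V_1=\dim V_{-1}\le\dim V_0$. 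So the enumeration must be restricted by this invertibility criterion. You must also show that the six classes are stable under isotopy, or that the invertible elements form a single orbit over $\overline{\FF}$.

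\emph{Characteristic $p\ge 7$: the proposed remedy does not work uniformly.} That no Cartan-type algebra carries such a grading is only asserted. Reducing to simple Lie algebras with a nondegenerate trace form presupposes that $\kan(\cA,\inv)$ has one, and this is not automatic in cases the theorem covers.

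For example, take $\cA=\cM_n(\FF)\oplus\cM_n(\FF)^{op}$ with the exchange involution and $p\mid n$. Then $\kan(\cA,\ex)$ is $\mathfrak{sl}_{3n}$, graded as above with $\dim V_i=n$, modulo its centre $\FF\,\mathrm{I}_{3n}$; that is, it is $\mathfrak{psl}_{3n}$. Its Killing form is induced by $6n\,\mathrm{tr}(xy)$ and so vanishes identically. Such cases therefore need a separate argument. Alternatively, you must use the full classification of simple modular Lie algebras and actually rule out the Cartan-type algebras.
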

		
Recall that the group of automorphisms of a structurable algebra $(\cA,\inv)$, denoted $\Aut(\cA,\inv)$, consists of the automorphisms of $\cA$ which commute with the involution. We say that $\varphi\in\GL(\cA)$ is an \emph{autotopy} if there is another isomorphism $\widehat{\varphi}\in\GL(\cA)$ such that
\begin{equation}
\varphi\left(V_{x,y}(z)\right) = V_{\varphi(x),\widehat{\varphi}(y)}\varphi(z).
\end{equation}
The group of autotopies is denoted by $\Str(\cA,\inv)$, and called the \emph{structure group} of $(\cA,\inv)$.
If $\chr\FF \neq  2,3$, there are inclusions
$$	\Aut(\cA,\inv) \longhookrightarrow \Str(\cA,\inv) \longhookrightarrow \Aut(\mathfrak{K}(\cA,\inv)), $$
the second inclusion, as shown in \cite[Proposition~12.3]{AH81}, induces an isomorphism between $\Str(\cA,\inv)$ and the group of automorphisms of the $\ZZ$-graded Lie algebra $\mathfrak{K}(\cA,\inv)$ (i.e., the automorphisms preserving the $\ZZ$-grading). That isomorphism still holds for automorphism group schemes \cite[Lemma 5.5]{Rig22}.
		
Autotopies can also be regarded as automorphisms of the Kantor pair associated to $(\cA, \inv)$. Given a structurable algebra $(\cA, \inv)$, recall that its associated Kantor triple system is given by $\cT_\cA := \cA$ with the triple product $\cT_\cA \times \cT_\cA \times \cT_\cA \to \cT_\cA$ given by
\begin{equation}
\{x, y, z\} := V_{x,y}(z) = (x\bar y) z + (z\bar y) x - (z\bar x) y,
\end{equation}
for all $x, y, z \in \cT_\cA$, and its associated Kantor pair is given by $\cV_\cA = (\cV_\cA^+, \cV_\cA^-) := (\cA, \cA)$ with triple products $\cV_\cA^\sigma \times \cV_\cA^{-\sigma} \times \cV_\cA^\sigma \to \cV_\cA^\sigma$, $\{x, y, z\}^\sigma := \{x, y, z\}$ for $x, z \in\cV_\cA^\sigma$, $y \in\cV_\cA^{-\sigma}$ and $\sigma\in\{+, -\}$. We may omit the superindex $\sigma$ when it is clear by the context. We may also write $x^\sigma$ for an element $x\in\cA$ to emphasize that it belongs to $\cV_\cA^\sigma$.

An element $x\in \cA$ is called \textit{conjugate invertible} if $V_{x,y} = \id_\cA = V_{y,x}$ for some $y\in \cA$ (see \cite[\S~6]{AH81}). In that case, $y$ is called the \textit{conjugate inverse} of $x$, and is denoted by $\hat{x}$.

\medskip

Let $(\cA, \inv)$ be an associative algebra with involution. If $(\cA, \inv)$ is central simple as an algebra with involution and the base field $\FF$ is algebraically closed (with $\chr\FF\neq 2$), then there are three possible cases up to isomorphism \cite[\S~2]{BOI98}. The first case is the \emph{orthogonal case}, given by $\cA = \cM_n(\FF)$ with the transposition involution $\bar x = x^\Tr$. The second case is the \emph{symplectic case}, given again by $\cA = \cM_n(\FF)$, but with $n=2m$, and the standard symplectic involution defined by $\bar x = x^\s := \Omega x^\Tr \Omega^{-1}$, for $x\in \cM_n(\FF)$, where
\[\Omega = \left( \begin{array}{c|c} 0 & \mathrm{I}_m \\ \hline - \mathrm{I}_m & 0\end{array} \right).\]
Notice that for
\[x = \left( \begin{array}{c|c} A & B \\
\hline C & D \end{array} \right) \in \cM_{n} (\FF),\quad
\text{we have}
\quad x^\s =  \left( \begin{array}{c|c} D^t & -B^t \\
\hline -C^t & A^t \end{array} \right),\]
where $A,B,C,D \in \cM_{m} (\FF)$.
The third and last case is the \emph{unitary case}, given by $\cA=\cM_n(\FF)\oplus \cM_n(\FF)^{op}$ with the exchange involution given by $\ex(x,y) := (y,x)$.

\bigskip

This work is organized in three more sections. In Section~\ref{s:section0} we recall basic results and notations about the automorphism group schemes of some associative algebras. In Section~\ref{s:section1} we prove the main theorems. In Theorem~\ref{automorphismschemes} we compute the automorphism group schemes of the Kantor pair and triple system related to an associative algebra with the orthogonal or the symplectic involution, and in Theorem~\ref{automorphismschemes.unitar} we compute the automorphism group schemes of the Kantor pair and triple system related to an associative algebra with the unitary involution. In section \ref{s:section2} we decompose the Kantor pairs related to an associative algebra with the orthogonal and symplectic involutions into the tensor products of some simpler metric generalized Jordan pairs (in the former case, the decomposition was already known).

From now on, $\FF$ will denote an algebraically closed field of characteristic different from 2.

\section{Preliminaries}\label{s:section0}

We will recall now some notation from \cite[\S~12.B]{BOI98}. Let $(\cA,\inv)$ be a central simple associative algebra with involution. Recall that a \emph{similitude} of $(\cA,\inv)$ is an element $x\in \cA$ such that $\bar{x}x\in \FF^\times$. The scalar $\mu(x) := \bar{x}x=x\bar{x}$ is called the \emph{multiplier of $x$}. An element $x\in \cA$ such that $\mu(x) = 1$ is an \emph{isometry} of $(\cA,\inv)$. The groups of similitudes and isometries of $(\cA,\inv)$ are respectively denoted by $\Sim(\cA,\inv)$ and $\Iso(\cA,\inv)$. Depending on if the involution is of orthogonal, symplectic, or unitary type, the groups of similitudes of $(\cA,\inv)$ are denoted by 
$\GO(\cA,\inv)$, $\GSp(\cA,\inv)$, or $\GU(\cA,\inv)$, and the groups of isometries by
$\Ort(\cA,\inv)$, $\Sp(\cA,\inv)$, or $\Uni(\cA,\inv)$. The corresponding group schemes are respectively denoted by $\bSim(\cA,\inv)$, $\bIso(\cA,\inv)$, $\bGO(\cA,\inv)$, $\bGSp(\cA,\inv)$, $\bGU(\cA,\inv)$, $\bOrt(\cA,\inv)$, $\bSp(\cA,\inv)$, and $\bUni(\cA,\inv)$.
Other important affine group schemes are: the multiplicative group scheme $\bG_m := \bGL_1$, whose group of $\cR$-points (where $\cR$ is an associative commutative unital $\FF$-algebra) is the multiplicative group $\cR^{\times}$; the group scheme $\bmu_n$, which is the subgroup scheme of $\bG_m$ whose group of $\cR$-points consists of the elements whose order divides $n$; the constant group scheme of a group $G$, denoted also as $G$, which is represented by the Hopf algebra $\prod_{g\in G}\FF e_g$ where the elements $e_g$ are idempotents, and the coproduct is given by $\Delta(e_h)=\sum_{g_1g_2=h}e_{g_1}\otimes e_{g_2}$ (see \cite[\S~20.A]{BOI98}).

When dealing with affine group schemes, we will denote scalar extensions as $V_\cR := V \otimes_{\FF} \cR$, where $V$ is a vector space over the field $\FF$ and $\cR$ is a unital associative commutative algebra.

\medskip

Let us recall the definition of central product of groups (see \cite[Chapter 2]{G80}). Given two groups $G_1$ and $G_2$, two isomorphisms $\gamma_i: H \rightarrow H_i$ such that $H_i \leq Z(G_i)$, for $i=1,2$, let $N_{\gamma}:= \lbrace (\gamma_1(h),\gamma_2(h)^{-1}) \vert h\in H \rbrace = \lbrace (h, \gamma(h)^{-1}) \vert h\in H_1 \rbrace$ for $\gamma:= \gamma_2 \circ \gamma_1^{-1}: H_1 \rightarrow H_2$. Then, the group $(G_1 \times G_2) / N_{\gamma}$ is called \textit{the (outer) central product of $G_1$ and $G_2$ relative to $\gamma$}. If the isomorphisms $\gamma_i$ are clear by context, we will denote it by $G_1 \otimes_H G_2$.

\begin{remark} \label{rem.schemes}	

Let $\cA = \cM_n(\FF)$. Recall from \cite[\S~23]{BOI98} that we have the adjoint morphism $\text{Ad} \colon \bGL_n \to \bAut(\cA)$ which is an epimorphism with kernel $\bG_m := \bGL_1$. This implies $\bPGL_n := \bGL_n / \bG_m \simeq \bAut(\cA)$; furthermore, due to the sheaf property of quotients \cite[\S~15.5]{W79}, given an element $\varphi\in\bAut(\cA)(\cR) = \Aut_\cR(\cA_\cR)$, there is a faithfully flat extension $\iota\colon \cR\to \cS$ such that $\bAut(\cA)(\iota)(\varphi)$ is a conjugation by an element of $\bGL_n(\FF)(\cS) = \GL_n(\cS)$. It is easy to see that if $\inv$ is an involution on $\cA$, then for any element $\varphi\in\bAut(\cA, \inv)(\cR) = \Aut_\cR(\cA_\cR, \inv)$ there is a faithfully flat extension $\iota\colon \cR\to \cS$ such that $\bAut(\cA, \inv)(\iota)(\varphi)$ is a conjugation by an element of $\bSim(\cA, \inv)(\cS) = \Sim_\cR(\cA_\cS, \inv)$. It follows that $\bAut(\cA, \inv) \simeq \bSim(\cA, \inv) / \bG_m$.

\end{remark}
\begin{remark}\label{rem.schemeUnitaryCase}
Let $\cA=\cM_n(\FF)\oplus \cM_n(\FF)^{op}$ with the exchange involution $\ex(x,y):=(y,x)$. Denote the center of $\cA$ by $\KK:=\cZ(\cA)=\FF (\mathrm{I}_n,0)\oplus \FF(0,\mathrm{I}_n)$. As shown in \cite[\S~23.A]{BOI98}, $\bAut(\KK)\cong \bmu_2$ the constant group scheme of cyclic group of order 2 generated by $\ex$. Moreover, it is shown that  there is an epimorphism
\begin{equation*}
\pi \colon \bAut(\cA,\ex) \longrightarrow \bAut(\KK) \cong \bmu_2
\end{equation*}
given by the restriction. Its kernel is $\bAut_{\KK}(\cA,\ex)$, the group scheme of automorphisms which fix the center. There is an isomorphism
\begin{equation}\label{eq:unitaryIota}
\iota \colon \bAut(\cM_n(\FF)) \longrightarrow \bAut_{\KK}(\cA,\ex)
\end{equation}
given by
$$ \varphi \longmapsto (\varphi,\varphi^{op}). $$
Thus, there is a short exact sequence
\begin{equation}\label{eq.exactSequenceUnitary}
1 \longrightarrow \bAut(\cM_{n}(\FF))\overset{\iota}{\longrightarrow}
\bAut(\cA,\ex)\overset{\pi}{\longrightarrow} \bmu_2 \longrightarrow 1.
\end{equation}
The sequence splits via the morphism
$$ \bmu_2 \longrightarrow \bAut(\cA,\ex), $$
which sends $r\in \bmu_2(\cR)$ to the morphism $(x,y)\mapsto (y^\Tr,x^\Tr)\otimes\frac{1}{2}(1-r)+(x,y)\otimes\frac{1}{2}(1+r)$.
Thus,
\begin{equation}\label{eq:IsomUnitary}
\bAut(\cA,\ex) \cong \bAut_{\KK}(\cA,\ex)\rtimes  \bmu_2.
\end{equation}
\end{remark}

\section{Kantor pairs of associative algebras with involution} \label{s:section1}
		
In this section, $(\cA, \inv)$ will denote a (finite-dimensional) central simple associative algebra with involution. (Note that $\cA$ may not be central simple as an algebra without involution). The associated Kantor pair and Kantor triple system will be denoted by $\cV = \cV_\cA$ and $\cT = \cT_\cA$, respectively.

\begin{remark}\label{rm:inclusions}
	Given a structurable algebra $(\cA,\inv)$ there are well-known inclusions:
	
	\[\begin{matrix}
		\bAut(\cA,\inv)&\longrightarrow&\bAut(\cT_{\cA})&\longrightarrow&\bAut(\cV_{\cA})\\
		\psi&\longmapsto& \psi&\longmapsto& (\psi,\psi)
	\end{matrix}\]
	which we will denote by $\iota$.
\end{remark}
		
\begin{notation} \;
\begin{enumerate}
\item[$1)$]
Denote by $L_a$ and $R_a$, respectively, the left and right multiplication by $a\in\cA$.
For $a\in\cA^\times$, we claim that 
\begin{equation}
\widetilde{L}_a := (L_a, L_{\bar{a}}^{-1}) \in \Aut \cV.
\end{equation}
Indeed, we have
\begin{align*}
& L_a(x) \overline{L_{\bar a}^{-1}(y)} L_a(z) = (ax) (\overline{\bar{a}^{-1}y}) (az)
= ax \bar{y}(a^{-1}a)z = L_a(x \bar{y} z), \\
& L_a(z) \overline{L_a(x)} L_{\bar a}^{-1}(y) = (az) (\overline{ax}) (\bar{a}^{-1}y)
= az \bar{x} (\bar{a} \bar{a}^{-1}) y = L_a(z \bar{x} y), \\
\end{align*}
and replacing $a$ by $\bar{a}^{-1}$ we also get
\begin{align*}
L_{\bar a}^{-1}(x) \overline{L_a(y)} L_{\bar a}^{-1}(z) = L_{\bar a}^{-1}(x \bar{y} z),
\qquad
L_{\bar a}^{-1}(z) \overline{L_{\bar a}^{-1}(x)} L_a(y) = L_{\bar a}^{-1}(z \bar{x} y).
\end{align*}
Then the claim follows easily. Note that for each $a \in \Sim(\cA, \inv)$, we have
\begin{equation}
\widetilde{L}_a = \widehat{L}_a := (L_a, \mu(a)^{-1}L_a).
\end{equation}
\item[$2)$]
Given $a\in \Iso(\cA,\inv)$, we claim that $L_a, R_a \in\Aut \cT$. Indeed, the claim follows from
\begin{align*}
& L_a(x) \overline{L_a(y)} L_a(z) = (ax) (\overline{ay}) (az)
= ax \bar{y}(\bar{a}a)z = ax \bar{y}z = L_a(x \bar{y} z), \\
& R_a(x) \overline{R_a(y)} R_a(z) = (xa) (\overline{ya}) (za)
= x(a\bar{a})\bar{y}za = x \bar{y} za = R_a(x \bar{y} z).
\end{align*}
For $a\in \Sim(\cA,\inv)$, it is easy to see that
\begin{equation}
\widehat{R}_a := (R_a, \mu(a)^{-1} R_a) \in \Aut \cV.
\end{equation}
Note that $\widehat{R}_a = R_a$ if $a \in \Iso(\cA,\inv)$.
\item[$3)$]
Note that for each $\lambda\in\FF^\times$, we have $c_\lambda = (c_\lambda^+, c_\lambda^-) := \widetilde{L}_{\lambda 1} \in \Aut \cV$, where $c_\lambda^\sigma(x) := \lambda^{\sigma 1}x$ for $\sigma = \pm$. (Note that $\{c_\lambda \med \lambda\in\FF^\times \}$ is the $1$-torus producing the $\ZZ$-grading associated to the Kantor construction from $\cV$.)
\end{enumerate}
\end{notation}

\bigskip

It is well-known \cite[\S~10, Example~(1)]{AH81} that if $(\cA, \inv)$ is a finite-dimensional unital associative $\FF$-algebra with involution over a field $\FF$ of characteristic different from $2$ and $3$, then an element $u\in\cA$ is conjugate invertible if and only if $u$ is invertible; moreover, in this case we have that $\hat u = {\bar u}^{-1}$. We will now prove that, in some cases, we can extend that result to the case with $\chr\FF = 3$, as follows:

\begin{lemma} \label{lemma.inverse}
Let $\cR$ be an associative commutative unital $\FF$-algebra. Consider an algebra with involution $(\cA, \inv)$ with $\cA := \cM_n(\FF)$. Then, an element $x\in \cA_\cR = \cM_n(\cR)$ is conjugate invertible if and only if $x$ is invertible in $\cA_\cR$. In addition, in that case we have $\hat x = \bar{x}^{-1}$.
\end{lemma}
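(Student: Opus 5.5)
We prove the two implications separately, working directly with the associative formula $V_{x,y}(z) = x\bar y z + z\bar y x - z\bar x y$ in $\cA_\cR = \cM_n(\cR)$. The involution $\inv$ is extended $\cR$-linearly and is still an anti-automorphism. The argument does not use $\chr\FF\neq 3$, which is the point of the lemma.

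\emph{Invertible implies conjugate invertible.} Suppose $x$ is invertible and put $y := \bar x^{-1}$. Then $\bar y = x^{-1}$ and $\bar x y = 1$, so
\[
V_{x,y}(z) = x x^{-1} z + z x^{-1} x - z\,\bar x\,\bar x^{-1} = z + z - z = z,
\]
and $V_{x,y} = \id$. In the other order we get $V_{y,x}(z) = \bar x^{-1}\bar x z + z\bar x\bar x^{-1} - z\,\overline{\bar x^{-1}}\,x = z + z - z x^{-1} x = z$. So $x$ is conjugate invertible, with conjugate inverse $\bar x^{-1}$.

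\emph{Conjugate invertible implies invertible.} Suppose $V_{x,y} = \id = V_{y,x}$ for some $y$. Evaluating at $z = 1$ in the first identity gives
\[
x\bar y + \bar y x - \bar x y = 1. \qquad (\ast)
\]
Evaluating at $z = x$ gives $x\bar y x = x$. Writing $a := x\bar y$, this says $a^2 = a$, so $a$ is an idempotent of $\cM_n(\cR)$ and $e := 1 - a$ satisfies $ex = 0$. Symmetrically, $b := \bar y x$ is an idempotent and $x(1-b) = 0$.

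It remains to show that these idempotents are trivial. The idea is that a nonzero idempotent killing $x$ should contradict $V_{x,y} = \id$. Multiplying $(\ast)$ on the left by $e$ and using $ex = 0$ gives $e\bar y x - e\bar x y = e$. Since $e\bar y x = e b$, this reads $e\bar x y = e(b - 1)$. To close the argument I would combine this with the identity $V_{y,x}(z) = z$ evaluated at suitable $z$, such as $z = y$ (which gives $y\bar x y = y$), and with the involution applied to these identities. The aim is to show $e = 0$, or $eb = e$ and then $e = 0$.

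\emph{Fallback.} If the idempotent bookkeeping does not close, I would use a rank/determinant argument. Pass to residue fields $\cR\to\kappa(\mathfrak p)$; invertibility in $\cM_n(\cR)$ is equivalent to $\det x$ being a unit, which is detected at every prime. Over a field, $x\bar y x = x$ and $y\bar x y = y$ give $\rank(x\bar y) = \rank x$. On the other hand, $(\ast)$ writes $1$ as a sum of three matrices each of rank at most $\rank x$. This only yields $n \le 3\rank x$, so on its own it is not enough. I would then restrict $V_{x,y} = \id$ to $z$ in the left ideal $\{z : zx = 0\}$ when $x$ is singular. For such $z$ the operator becomes $z \mapsto x\bar y z - z\bar x y$, and I would derive a contradiction from a trace or nilpotency argument on this restricted map.

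This step, excluding singular $x$, is the main obstacle. The uniqueness statement $\hat x = \bar x^{-1}$ then follows: once $x$ is invertible, $x\bar y x = x$ gives $\bar y = x^{-1}$, hence $y = \bar x^{-1}$.
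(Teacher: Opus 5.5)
\textbf{There is a genuine gap.} Your forward direction is correct. The converse, which is the real content of the lemma, is not proved: you leave ``excluding singular $x$'' open yourself.

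Your idempotent route also rests on a misderived identity. Evaluating $V_{x,y}=\id$ at $z=x$ gives $2x\bar y x - x\bar x y = x$, not $x\bar y x = x$. Combined with $(\ast)$ multiplied on the left by $x$, it only yields $x\bar y x = x^2\bar y$. So neither the idempotency of $x\bar y$ nor $y\bar x y = y$ is justified, and neither is your final uniqueness step that uses $x\bar y x = x$. The rank fallback, as you note, only gives $n\le 3\rank x$.

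The missing idea is a one-line rearrangement. Write $V_{x,y}(z)=z$ as $(1-x\bar y)z = z(\bar y x-\bar x y)$ for all $z\in\cM_n(\cR)$. Your identity $(\ast)$, the case $z=1$, says exactly that the left factor $e=1-x\bar y$ equals the right factor $\bar y x-\bar x y$. Hence $ez=ze$ for every $z$, so $e=r1$ is central with $r\in\cR$.

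On the other hand, $e=\bar y x-\bar x y$ satisfies $\bar e=-e$, while $\overline{r1}=r1$. Thus $2r=0$, and $r=0$ because $2\in\FF^\times$. Therefore $x\bar y=1$, so $\det x\in\cR^\times$. Then $x$ is invertible with $x^{-1}=\bar y$, and $y=\bar x^{-1}$.

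This is exactly the paper's argument. Your $e$ is precisely the paper's central element, so you were one observation away. The argument needs only $V_{x,y}=\id$, not $V_{y,x}=\id$. The hypothesis it actually uses is $\chr\FF\neq 2$, with nothing specific to characteristic $3$.
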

\begin{proof}
Let $y$ be a conjugate inverse of $x$, thus $V_{x,y} = \id$. Thus for each $z \in \cA_\cR$ we have that $z = V_{x,y}(z) = x\bar{y} z + z(\bar{y} x - \bar{x} y)$, or equivalently, $(1 - x \bar{y})z = z(\bar{y} x - \bar{x} y)$. By taking $z = 1$, we deduce that $a := 1 - x \bar{y} = \bar{y} x - \bar{x} y$. Since $az = za$ for each $z \in \cA_\cR$, we have $a \in Z(\cA_\cR) = \cR 1$. Note that $a = \bar{y} x - \bar{x} y \in \cS(\cA_\cR, \inv)$. Thus $a \in \cR 1 \cap \cS(\cA_\cR, \inv) = 0$ and $x\bar{y} = 1$. Since $\cA_\cR$ is a matrix algebra, it follows that $x,y\in \cA_\cR^\times$ and $x^{-1} = \bar{y}$. The converse of the result is obvious.
\end{proof}

\smallskip

\begin{lemma}\label{l:AutTriples}
Let $n \in \NN$, $\cA := \cM_n(\FF)$ and $\inv$ either the transposition or the standard symplectic involution on $\cA$. Let $\cT$ be the Kantor triple system associated to $(\cA, \inv)$. Let $\iota$ be the inclusion given in Remark \ref{rm:inclusions}. Given an associative commutative unital $\FF$-algebra $\cR$, $\varphi\in\bAut(\cT)(\cR)$, $a\in\bGL_n(\cR) = \cA^\times_\cR$ and $b\in\bSim(\cA,\inv)(\cR)$, such that $\iota_{\cR}(\varphi)=\widetilde{L}_{a}\widehat{R}_{\overline{b}}$, then $a\in\bSim(\cA,\inv)$ and $\mu(a) = \mu(b)^{-1}$.
\end{lemma}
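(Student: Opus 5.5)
We compare the two components of $\iota_\cR(\varphi) = (\varphi,\varphi)$ with those of $\widetilde{L}_a\widehat{R}_{\bar b}$. Since $\widetilde{L}_a = (L_a, L_{\bar a}^{-1})$ and $\widehat{R}_{\bar b} = (R_{\bar b}, \mu(\bar b)^{-1}R_{\bar b})$, and left and right multiplications commute, the hypothesis gives
\[
\varphi = L_a R_{\bar b} \quad\text{and}\quad \varphi = \mu(b)^{-1} L_{\bar a}^{-1} R_{\bar b}.
\]
Here we use $\mu(\bar b) = \mu(b)$, which holds because $\bar b b = b\bar b$ is the multiplier. Note that $\bar b$ is again a similitude.

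Equating the two expressions and cancelling the invertible operator $R_{\bar b}$ gives $L_a = \mu(b)^{-1} L_{\bar a}^{-1}$ as $\cR$-linear maps of $\cA_\cR$. Composing on the left with $L_{\bar a}$ yields $L_{\bar a a} = \mu(b)^{-1}\id$. Evaluating at $1$ gives $\bar a a = \mu(b)^{-1}1 \in \cR^\times 1$.

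Since $\cA_\cR = \cM_n(\cR)$, the one-sided relation $\bar a a = \mu(b)^{-1}$ already implies $a\bar a = \mu(b)^{-1}$. Indeed, $\mu(b)\bar a$ is a left inverse of $a$, and for matrices over a commutative ring a left inverse is a two-sided inverse. Alternatively, apply the involution to $\bar a a = \mu(b)^{-1}$ to get the same element back, then use the invertibility of $a$. Hence $a\in\bSim(\cA,\inv)(\cR)$ with $\mu(a) = \mu(b)^{-1}$.

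The only delicate point is bookkeeping. One must check the exact form of $\widehat{R}_{\bar b}$ on the minus component: that the multiplier of $\bar b$ equals $\mu(b)$, and that the composite of $\widetilde{L}_a$ and $\widehat{R}_{\bar b}$ is taken componentwise in the same order. These facts are immediate from Notation 1) and 2), applied over $\cR$ in place of $\FF$.
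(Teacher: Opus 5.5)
Your proof is correct and takes the same approach as the paper. Comparing the two components of $\widetilde{L}_a\widehat{R}_{\bar b}$ gives $\mu(b)\bar a a x = x$ for all $x\in\cA_\cR$, hence $\mu(b)\bar a a = 1$; your remark that this one-sided identity also yields $a\bar a = \mu(b)^{-1}$, since $a$ is invertible in $\cM_n(\cR)$, fills in a detail the paper leaves implicit.
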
		
\begin{proof}
	 Notice that $\iota_{\cR}(\varphi)= (\varphi^+, \varphi^-)$ satisfies $\varphi^+ = \varphi^-$.  Equality $\varphi^+(x) = \varphi^-(x)$ implies that $\widetilde{a} x = x$ for all $x\in\cA_\cR$, where $\widetilde{a} := \mu(b) \bar{a} a$. It follows that $\widetilde{a} = 1$. Thus $a \in \Sim_\cR(\cA_\cR,\inv)$ with $\mu(a) = \mu(b)^{-1}$
\end{proof}

\smallskip

\begin{theorem} \label{automorphismschemes}
Let $n \in \NN$, $\cA := \cM_n(\FF)$ and $\inv$ either the transposition or the standard symplectic involution on $\cA$. Let $\cT$ and $\cV$ be the Kantor triple system and Kantor pair associated to $(\cA, \inv)$. Then, there are isomorphisms of affine group schemes
\begin{align}
& \bAut(\cV) \simeq \bG_m(\cA) \otimes_{\bG_m} \bSim(\cA,\inv), \\
& \bAut(\cT) \simeq \bIso(\cA,\inv)\otimes_{\bmu_2}\bIso(\cA,\inv),
\end{align}
where we denote $\bG_m(\cA)(\cR) := \bG_m(\cA_\cR) = \cA^\times_\cR$ (so that $\bG_m(\cA) = \bGL_n$). \\
$\blacksquare$ In particular, for the transposition involution we have
\begin{align}
& \bAut(\cV) \simeq \bGL_n \otimes_{\bG_m} \bGO_n, \\
& \bAut(\cT) \simeq \bOrt_n \otimes_{\bmu_2} \bOrt_n,
\end{align}
and for the standard symplectic involution with $n = 2m$ we have
\begin{align}
& \bAut(\cV) \simeq \bGL_n \otimes_{\bG_m} \bGSp_n, \\
& \bAut(\cT) \simeq \bSp_n \otimes_{\bmu_2} \bSp_n.
\end{align}
\end{theorem}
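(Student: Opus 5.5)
The plan is to build an explicit morphism of group schemes
\[
\Phi\colon \bGL_n\times\bSim(\cA,\inv)\to\bAut(\cV),\qquad (a,b)\mapsto \widetilde{L}_a\widehat{R}_{\bar b},
\]
and prove it is a quotient map with kernel the image of $\bG_m$ under $\lambda\mapsto(\lambda,\lambda^{-1})$.

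\textbf{Homomorphism and kernel.} Left and right multiplications commute, so $\widetilde{L}_a$ and $\widehat{R}_{\bar b}$ commute. Also $b\mapsto\widehat{R}_{\bar b}$ is multiplicative, since $\overline{bb'}=\bar b'\bar b$ and $R_{\bar b'\bar b}=R_{\bar b}R_{\bar b'}$. These two facts (checked at each $\cR$-point, using the Notation) show $\Phi$ is a homomorphism.

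For the kernel, suppose $\Phi(a,b)=1$. Then $a\,x\,\bar b=x$ for all $x\in\cA_\cR$. Taking $x=1$ gives $a\bar b=1$, so $ax=xa$ for all $x$. Hence $a=\lambda 1$ is central, with $\lambda\in\cR^\times$, and then $\bar b=\lambda^{-1}$, so $b=\lambda^{-1}$. This gives the central product $\bGL_n\otimes_{\bG_m}\bSim(\cA,\inv)$; centrality of the identified subgroups is clear.

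\textbf{Surjectivity (main step).} By the sheaf property (Remark \ref{rem.schemes}) it suffices to show that every $\varphi=(\varphi^+,\varphi^-)\in\Aut_\cR(\cV_\cR)$ becomes of the form $\widetilde{L}_a\widehat{R}_{\bar b}$ after a faithfully flat extension of $\cR$.

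1. Put $u:=\varphi^+(1)$. Automorphisms of Kantor pairs preserve conjugate invertibility, because $V_{x,y}=\id=V_{y,x}$ is transported. By Lemma~\ref{lemma.inverse}, $1$ is conjugate invertible, so $u$ is invertible.
2. Replace $\varphi$ by $\widetilde{L}_u^{-1}\varphi$, so that we may assume $\varphi^+(1)=1$.
3. Since $\hat 1=1$ and conjugate inverses are transported, also $\varphi^-(1)=1$.
4. Now $\varphi^+(x\bar y z)$-type identities with $y=z=1$ show that $\varphi^\sigma$ respect $x\mapsto V_{x,1}$ and the products $\{x,1,z\}=x z+z x-z\bar x$. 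I expect to extract from this that $\varphi^+=\varphi^-=:\psi$ is an automorphism of $(\cA_\cR,\inv)$. This is the hardest point, and I would try first to recover the product $xz$ via $\{x,1,z\}+\{z,1,x\}$ together with the involution.
5. Once that is done, Remark~\ref{rem.schemes} gives, after a faithfully flat extension, $\psi=\mathrm{Int}(s)$ with $s$ a similitude. Then $\psi=\widetilde{L}_{s}\widehat{R}_{\overline{b}}$ with $\bar b=s^{-1}$ up to the scalar $\mu$, and $b\in\bSim$.

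\textbf{Triple system.} Apply the same argument to $\bAut(\cT)\subseteq\bAut(\cV)$ via $\iota$, using Lemma~\ref{l:AutTriples}. An element $\widetilde{L}_a\widehat{R}_{\bar b}$ lies in the image iff $a\in\bSim$ with $\mu(a)=\mu(b)^{-1}$. After a faithfully flat extension we can rescale $(a,b)\mapsto(\lambda a,\lambda^{-1}b)$ with $\lambda^2=\mu(b)$ (extracting a square root is faithfully flat since $\chr\FF\ne2$), making $a,b\in\bIso$.

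Thus $\bIso\times\bIso\to\bAut(\cT)$ is a quotient map. Its kernel is $\{(\lambda,\lambda^{-1})\}$ with $\lambda^2=1$, i.e.\ $\bmu_2$. The particular cases follow by the notation $\bSim=\bGO_n,\bGSp_n$ and $\bIso=\bOrt_n,\bSp_n$.
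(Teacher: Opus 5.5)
Your overall structure matches the paper's proof, and those parts are fine: the map $(a,b)\mapsto\widetilde{L}_a\widehat{R}_{\bar b}$ (using $\bar b$ is the right choice to get a homomorphism), the kernel computation, the normalisation $\varphi^+(1)=1$ via Lemma~\ref{lemma.inverse}, $\varphi^-(1)=1$ by uniqueness of conjugate inverses, the final conjugation by a similitude, and the square-root rescaling for $\cT$.

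The gap is step~4, which carries the real content of surjectivity. You must show that the stabilizer of $(1^+,1^-)$ in $\bAut(\cV)$ is $\bAut(\cA,\inv)$ embedded diagonally, and you only say you expect it. The route you sketch does not get there, for two reasons.
\begin{enumerate}
\item The expression $\{x,1,z\}+\{z,1,x\}=2(xz+zx)-z\bar x-x\bar z$ is symmetric in $x,z$, so it cannot recover $xz$.
\item More importantly, identities whose middle entry is $1$ only involve $\varphi^-(1)=1$. They constrain $\varphi^+$ and $\varphi^-$ separately and can never give $\varphi^+=\varphi^-$: the pair $(\id,\psi)$ with any $\psi\in\Aut(\cA,\inv)$ satisfies all of them.
\end{enumerate}
You need a coupling identity such as $\{1,y,1\}=2\bar y-y$. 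It gives $\varphi^+(2\bar y-y)=2\overline{\varphi^-(y)}-\varphi^-(y)$, which forces $\varphi^+=\varphi^-$ on $\cH(\cA,\inv)$ but only $3\varphi^+=3\varphi^-$ on $\cS(\cA,\inv)$.

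If $\chr\FF\neq 3$ you can finish by hand. The identity $\{x,1,1\}=2x-\bar x$ gives compatibility with $\inv$. The identities $\{h,1,z\}=hz$ for hermitian $h$ and $\{s,1,s'\}-2\{s',1,s\}=-3ss'$ for skew $s,s'$ give multiplicativity. The coupling identity above then gives $\varphi^+=\varphi^-$.

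However, the paper only assumes $\chr\FF\neq2$. In characteristic $3$ one has $\{s,1,s'\}=[s,s']$ and $\{1,s,1\}=0$ for skew $s,s'$, so this information disappears. Corollary~\ref{corollary.AutDim2} shows that in that characteristic, for $\FF\oplus\FF$ with the exchange involution, the stabilizer of $(1^+,1^-)$ is strictly larger than $\bAut(\cA,\inv)$. So some structural input on $(\cA,\inv)$ is indispensable.

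The paper supplies it in two steps. First it checks, with matrix units, that $\cH(\cA,\inv)\cup\cS(\cA,\inv)^2$ generates $\cA$ for both involutions. For the symplectic case with $n=2$ one has $\cH(\cA,\inv)=\FF1$, so $\cS(\cA,\inv)^2$ is really needed there. It then invokes \cite[Proposition~3.1]{AC21}.

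Without this step, or an equivalent argument (e.g.\ using $\{s,s',1\}=-s's$ to relate $\varphi^+$ and $\varphi^-$ on $\cS(\cA,\inv)^2$), your proof of surjectivity is incomplete, and therefore so are both isomorphisms.
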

\begin{proof}
Let $\cR$ be an associative commutative unital $\FF$-algebra. Note that $\cA^\times_\cR = \cM_n(\cR)^\times = \GL_n(\cR)$. Consider the group scheme morphism
\begin{equation}\label{eq:IsoAutV}
\Phi\colon \bGL_n \times \bSim(\cA, \inv) \longrightarrow \bAut(\cV)
\end{equation}

\noindent given by $\Phi_{\cR}(a,b)=\widetilde{L}_a\widehat{R}_b$. We will show that it is an epimorphism and that the kernel is $\bG_m$.

 Let $\varphi = (\varphi^+, \varphi^-)\in\Aut_\cR(\cV_\cR)$ and set $x^+ = \varphi^+(1^+)$. Since automorphisms send conjugate invertible elements to conjugate invertible elements, by Lemma~\ref{lemma.inverse} we have that $x^+\in\cA_\cR^\times$. Therefore, there exists $a = (x^+)^{-1} \in\cA_\cR^\times$ with $\widetilde{L}^+_a (x^+) = 1^+$. Thus $\psi := \widetilde{L}_a \varphi \in \Stab_{\Aut_\cR(\cV_\cR)}(1^+)$. Since automorphisms preserve conjugate inverses, and the only conjugate inverse of $1^+$ is $1^-$ (see Lemma~\ref{lemma.inverse}), it follows that $\Stab_{\Aut_\cR(\cV_\cR)}(1^+) = \Stab_{\Aut_\cR(\cV_\cR)}(1^+,1^-)$.
			
We claim that $\cH(\cA, \inv) \cup \cS(\cA, \inv)^2$ generates $\cA$ as an $\FF$-algebra, where we denote
$$ \cS(\cA, \inv)^2 := \lspan\{ s_1 s_2 \med s_i \in \cS(\cA, \inv) \}. $$
To prove the claim, first consider the case with the transposition involution. The case with $n = 1$ is trivial. If $n > 2$, for any different $i,j,k$ we have $E_{ik} = (E_{ij}-E_{ji})(E_{jk}-E_{kj}) \in\cS(\cA, \inv)^2$, also $E_{ii}\in\cH(\cA, \inv)$ for each $i$. If $n = 2$, it suffices to see that $\cS(\cA, \inv) = \FF(E_{12} - E_{21})$ where $E_{12} - E_{21} = (E_{11} - E_{22})(E_{12} + E_{21})$. Now, consider the case with the symplectic involution. If $n = 2$, then $\cA$ is the split quaternion algebra, and it is easy to see that $\cS(\cA, \inv)^2$ generates $\cA$ as an $\FF$-algebra. Now assume $n > 2$. If $i\neq k$, then $\diag(E_{ij},E_{ji})\diag(E_{jk},E_{kj}) = \diag(E_{ik},0)$, $\diag(E_{ik},0)\diag(E_{ki},0) = \diag(E_{ii},0)$ are matrices in the generated algebra, and the same arguments work with matrices in the lower-right block. If we multiply matrices of the form $\diag(E_{ik},0)$ or $\diag(0,E_{ik})$ by hermitian matrices of the antidiagonal blocks, the claim follows easily.

Since the above claim holds, by \cite[Proposition~3.1]{AC21} we get that $\bAut(\cA,\inv)$ is the stabilizer of $(1^+,1^-)$ in $\bAut(\cV)$. It follows that $\psi^+ = \psi^-$ and $\psi\in\Aut_\cR(\cA_\cR, \inv)$. From Remark~\ref{rem.schemes}, we know that there is a faithfully flat extension $\tau\colon \cR\to \cS$ and $c\in\bSim(\cA,\inv)(\cS)$ such that $\bAut(\cA,\inv)(\tau)(\psi_+)=\gamma_c$. Using Remark \ref{rm:inclusions} it follows that $\bAut(\cV)(\tau)(\psi)=\widehat{L}_c\widehat{R}_{c^{-1}}=\widetilde{L}_c\widehat{R}_{c^{-1}}$. Therefore $\bAut(\cV)(\tau)(\varphi)=\bAut(\cV)(\tau)(\widetilde{L}_{a^{-1}})\widetilde{L}_c\widehat{R}_{c^{-1}}=\Phi_{\cS}(\tau(a^{-1})c,c^{-1})$. Therefore, due to the sheaf property of quotients \cite[\S~15.5]{W79} it follows that $\Phi$ is an epimorphism.

			Take $(a,b)\in\ker\Phi_\cR$. Then $\Phi_\cR(a,b) = \widetilde{L}_a \widehat{R}_{\bar{b}}$ is the identity map, so that we get $ax\bar{b} = x$, or equivalently $ax = x(\bar{b})^{-1}$, for each $x\in\cA_\cR$. It is easy to see that this implies that $a = r 1 = b^{-1}$ for some $r\in \cR^\times$. We have shown that $\ker\Phi_\cR = \{ (r1,r^{-1}1) \med r \in \cR^\times \} \equiv \cR^\times = \bG_m(\cR)$. Hence the first isomorphism follows.

In order to prove the isomorphism for $\bAut(\cT)$, consider the morphism of group schemes
\begin{equation}
\Psi\colon \bIso(\cA,\inv)\times \bIso(\cA,\inv)\longrightarrow \bAut(\cT)
\end{equation}
given by $\Psi_{\cR}(a,b)=L_aR_{\overline{b}}$. We will first show that $\Psi$ is an epimorphism. Given $\varphi\in\bAut(\cT)(\cR)$, since $\Phi$ is an epimorphism, by the sheaf property of quotients \cite[\S~15.5]{W79} it follows that there is a faithfully flat extension $\tau\colon\cR\to\cS$, $a\in\bG_m(\cA)(\cR) = \GL_n(\cR)$ and $b\in\bSim(\cA,\inv)(\cR)$ such that $\iota_{\cS}(\bAut(\cT)(\tau)(\varphi))=\bAut(\cT)(\tau)(\iota_{\cR}(\varphi))=\widetilde{L}_{a}\widehat{R}_{\overline{b}}$. Due to Lemma \ref{l:AutTriples}, it follows that $a\in\bSim(\cA,\inv)(\cR)$ and $\mu(a)=\mu(b)^{-1}$. Denote $\mathcal{U}:=\frac{\cS[T]}{\langle T^2-\mu(b)^{-1}\rangle}$ and denote by $t$ a representative of $T$ in the quotient. Consider the faithfully flat extension $\theta\colon \cS\to \mathcal{U}$ given by the inclusion. We get that $\iota_{\mathcal{U}}(\bAut(\cT)(\theta\tau)(\varphi))=\bAut(\cV)(\theta\tau)(\iota_{\cR}(\varphi))=\widetilde{L}_{a}\widehat{R}_{\overline{b}}=\widetilde{L}_{t^{-1}a}\widehat{R}_{t\overline{b}}$. Thus, we get that $\bAut(\cT)(\theta\tau)(\varphi)=L_{t^{-1}a}R_{t\overline{b}}=\Psi_{\mathcal{U}}(t^{-1}a,t\overline{b})$. Therefore, due to the sheaf property of quotients \cite[\S~15.5]{W79} it follows that $\Psi$ is an epimorphism. Now, as with $\Phi$ we can show that $\ker\Psi=\bmu_2$, which implies the second isomorphism.
\end{proof}

\begin{lemma}\label{lemma.HermitianSkewHermitianCubed}
Let $\cA := \cM_n(\FF) \oplus \cM_n(\FF)^{op}$ with $n>1$ and $\inv$ the exchange involution of $\cA$. Then, $\cH(\cA,\inv)^3=\cS(\cA,\inv)^3=\cA$.	
\end{lemma}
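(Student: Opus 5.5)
The plan is to compute directly with matrix units, reading $\cH(\cA,\inv)^3$ and $\cS(\cA,\inv)^3$ as the linear spans of all products $h_1h_2h_3$ of three hermitian (respectively, three skew-hermitian) elements, as with $\cS(\cA,\inv)^2$ in the proof of Theorem~\ref{automorphismschemes}. Since both spans are subspaces of $\cA$, it suffices to show that each contains the basis $\{(E_{il},0),\,(0,E_{il}) \med 1\le i,l\le n\}$ of $\cA$.

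First, the hermitian and skew-hermitian elements for the exchange involution are $\cH(\cA,\inv)=\{(x,x)\med x\in\cM_n(\FF)\}$ and $\cS(\cA,\inv)=\{(x,-x)\med x\in\cM_n(\FF)\}$. Multiplication in $\cA$ is $(x,y)(x',y')=(xx',y'y)$, because the second factor is $\cM_n(\FF)^{op}$. Hence
\[
(x,x)(y,y)(z,z)=(xyz,\,zyx),\qquad (x,-x)(y,-y)(z,-z)=(xyz,\,-zyx).
\]

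The key step is to choose matrix units so that $xyz$ is a prescribed $E_{il}$ while $zyx=0$. Since $n>1$, we may pick an index $k\neq i$ and set $x=E_{il}$, $y=E_{lk}$, $z=E_{kl}$. Then $xyz=E_{il}E_{lk}E_{kl}=E_{il}$. On the other hand, $zyx=E_{kl}E_{lk}E_{il}=E_{kk}E_{il}=0$, because $k\neq i$. So $(E_{il},0)$ lies in both $\cH(\cA,\inv)^3$ and $\cS(\cA,\inv)^3$; in the skew case the vanishing second component makes the sign irrelevant.

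By the symmetric choice, which reverses the order so that $zyx=E_{il}$ and $xyz=0$, we get $(0,E_{il})$ in $\cH(\cA,\inv)^3$ and $(0,-E_{il})$ in $\cS(\cA,\inv)^3$. Concretely, take $z=E_{il}$, $y=E_{lk}$, $x=E_{kl}$ with $k\neq i$. The sign in the skew case is harmless because we are taking spans.

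Thus both spans contain a basis of $\cA$, and the lemma follows. The only real obstacle is the hypothesis $n>1$, which is needed to find an index $k\neq i$. For $n=1$ the lemma genuinely fails: there $zyx=xyz$, so $\cH(\cA,\inv)^3=\cH(\cA,\inv)$, which is a proper subspace of $\cA$.
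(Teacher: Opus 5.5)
Your proof is correct and follows essentially the same route as the paper: both exhibit the basis elements $(E_{il},0)$ and $(0,E_{il})$ of $\cA$ as explicit products of hermitian (respectively skew-hermitian) matrix-unit elements, using $n>1$ to pick an auxiliary index. The paper works in two stages, obtaining off-diagonal units as products of two factors and then diagonal units with a third factor. Your choice $x=E_{il}$, $y=E_{lk}$, $z=E_{kl}$ with $k\neq i$, together with its reversal, gives every basis element directly as a triple product, which matches the reading of $\cH(\cA,\inv)^3$ and $\cS(\cA,\inv)^3$ as spans of triple products a bit more cleanly.
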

\begin{proof}
Let $\sigma=\pm$. For $i\neq j$ we have the following:
\begin{align*}
& (E_{ik},\sigma E_{ik})(E_{kj},\sigma E_{kj})=(E_{ij},0), \\
& (E_{kj},\sigma E_{kj})(E_{ik},\sigma E_{ik})=(0,E_{ij}).
\end{align*}
Hence, it follows that $(E_{,j},0)$ and $(0,E_{ij})$ belong to $\cS(\cA,\inv)^2$ and to $\cH(\cA,\inv)^2$. Finally:
\begin{align*}
& (E_{ij},0)(E_{ji},\sigma E_{ji})=(E_{ii},0), \\
& (0,\sigma E_{ji})(E_{ij},\sigma E_{ij})=(0,E_{ii}).
\end{align*}
\end{proof}

\begin{lemma}\label{lemma.Stab1PreserveCenter}
Let $\FF$ be a field of characteristic $3$, $\cA := \cM_n(\FF) \oplus \cM_n(\FF)^{op}$ with $n>1$ and $\inv$ the exchange involution of $\cA$. Let $\varphi=(\varphi^+,\varphi^-)\in \Stab_{\bAut(\cV_{\cA})(\cR)}(1^+)$. Then:
\begin{itemize}
\item[(1)] $\varphi^-(\cS(\cA,\inv))=\cS(\cA,\inv)$, and
\item[(2)] $\varphi^{\sigma}(\cZ(\cA_{\cR}))=\cZ(\cA_{\cR})$ for $\sigma=\pm$.
\end{itemize}
\end{lemma}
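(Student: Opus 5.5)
The plan is to read off invariant subspaces from the Kantor pair operators attached to $1^\pm$, and to use that in characteristic $3$ these operators detect skew elements and central elements.

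\textbf{Step 1: $\varphi$ also fixes $1^-$.} Since $\varphi$ fixes $1^+$, it sends the conjugate inverses of $1^+$ to conjugate inverses of $1^+$. So I first show that $1^-$ is the only conjugate inverse of $1$ in $\cA_\cR$. Let $y$ satisfy $V_{1,y}=\id=V_{y,1}$. I would argue as in Lemma~\ref{lemma.inverse}, working componentwise in $\cA_\cR=\cM_n(\cR)\oplus\cM_n(\cR)^{op}$.
\begin{itemize}
\item Evaluating at $z=1$ gives $2\bar y-y=1$.
\item The identity $V_{1,y}(z)=z$ for all $z$ then reads $\bar y z+z\bar y-zy=z$.
\item Splitting $y$ into hermitian and skew parts, this forces $y=1$.
\end{itemize}
The step where $\cZ(\cA_\cR)\cap\cS(\cA_\cR,\inv)=\cR(1,-1)\neq 0$ might interfere is the delicate point of the whole argument. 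Here I would use that $V_{y,1}=\id$ as well, together with $n>1$, to kill the central skew part. Hence $\varphi^-(1^-)=1^-$.

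\textbf{Step 2: part (1).} In $\cV$ we have
\[
U_{1^+}(y^-)=\{1,y,1\}=V_{1,y}(1)=2\bar y-y .
\]
This equals $y$ for hermitian $y$, and equals $-3y=0$ for skew $y$ since $\chr\FF=3$. So $\cS(\cA_\cR,\inv)=\ker\bigl(U_{1^+}\colon\cV^-_\cR\to\cV^+_\cR\bigr)$. Because $\varphi$ is an automorphism fixing $1^+$, we have $\varphi^+U_{1^+}=U_{1^+}\varphi^-$. Hence $\varphi^-$ preserves this kernel, which gives (1) over $\cR$. Symmetrically, using $\varphi^-(1^-)=1^-$ from Step~1, $\cS(\cA_\cR,\inv)=\ker U_{1^-}$ on $\cV^+$ is preserved by $\varphi^+$.

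\textbf{Step 3: part (2).} For skew $s$, a direct computation in characteristic $3$ gives two identities:
\[
V_{1^+,s^-}(z)=-sz-2zs=[z,s], \qquad V_{s^+,1^-}(z)=sz+2zs=[s,z].
\]
Thus a skew $s$ is central if and only if the corresponding operator vanishes. Conjugating by $\varphi$ gives
\[
\varphi^+V_{1,s}(\varphi^+)^{-1}=V_{1,\varphi^-(s)}, \qquad \varphi^+V_{s,1}(\varphi^+)^{-1}=V_{\varphi^+(s),1}.
\]
Combined with Step~2, $\varphi^\pm$ preserve $\cZ(\cA_\cR)\cap\cS=\cR(1,-1)$. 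For the hermitian part of the center, which is $\cR1$, $\varphi^\pm(1^\pm)=1^\pm$. Since $\cZ(\cA_\cR)=\cR1\oplus\cR(1,-1)$, (2) follows; equality rather than inclusion holds because $\varphi^{-1}$ satisfies the same hypotheses. Lemma~\ref{lemma.HermitianSkewHermitianCubed} is not needed here, but it will be used afterwards to conclude, as in Theorem~\ref{automorphismschemes}, that $\varphi\in\bAut(\cA,\ex)$.
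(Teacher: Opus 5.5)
Your Step~1 is false, and Step~3 depends on it. In characteristic $3$ the conjugate inverse of $1$ is \emph{not} unique. Put $s_0:=(\mathrm{I}_n,-\mathrm{I}_n)$, which is skew and central, and $y:=1+ts_0$ with $t\in\cR$. Then $V_{1,y}(z)=(1-ts_0)z+z(1-ts_0)-z(1+ts_0)=z-3ts_0z=z$, and $V_{y,1}(z)=(1+ts_0)z+z(1+ts_0)-z(1-ts_0)=z+3ts_0z=z$, for every $n$. So neither $V_{y,1}=\id$ nor $n>1$ removes the central skew part. Done correctly, your computation shows that the conjugate inverses of $1$ are exactly the elements of $1+\cR s_0$; compare Corollary~\ref{corollary.AutDim2}, where $\varphi^-(1)=1+\frac{\alpha}{\beta}s$ for $n=1$. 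The equality $\varphi^-(1^-)=1^-$ does hold for $n>1$, but that is Lemma~\ref{lemma.Stab1}. Its characteristic-$3$ proof restricts $\varphi$ to the pair of centers using precisely the statement you are proving, so you cannot invoke it here.

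Your Step~2 is correct and is the paper's proof of (1). In Step~3, however, every statement about $\varphi^+$ uses $\varphi^-(1)=1$. This applies to the ``symmetric'' version of Step~2 via $\ker U_{1^-}$, and to the conjugation formula, which in general only gives $\varphi^+V_{s,1}(\varphi^+)^{-1}=V_{\varphi^+(s),\varphi^-(1)}$. Your claim that $\varphi^-$ fixes the hermitian part $\cR 1$ of the center also relies on it.

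The missing idea is to swap roles in your own identity $\{1,s,z\}^+=V_{1,s}(z)=[z,s]$: use it to detect central $z$ rather than central $s$. Since $\cS(\cA_\cR,\inv)$ generates $\cA_\cR$ (this is where Lemma~\ref{lemma.HermitianSkewHermitianCubed} is needed), $z\in\cZ(\cA_\cR)$ if and only if $\{1,\cS(\cA_\cR,\inv),z\}^+=0$. As $\varphi^+\{1,y,z\}^+=\{1,\varphi^-(y),\varphi^+(z)\}^+$ and $\varphi^-$ preserves skew elements by (1), you get $\varphi^+(\cZ(\cA_\cR))=\cZ(\cA_\cR)$. Likewise, $\{y,1,z\}^-=[y,z]$ for skew $y$ gives the statement for $\varphi^-$. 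This route uses only $1^+$, never $1^-$.

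For $\varphi^-$ there is also an alternative. Since $\varphi^-(1)$ is a conjugate inverse of $\varphi^+(1)=1$, it lies in $1+\cR s_0\subseteq\cZ(\cA_\cR)$. Combined with your correct argument that $\varphi^-(s_0)\in\cZ(\cA_\cR)\cap\cS(\cA_\cR,\inv)$, this gives $\varphi^-(\cZ(\cA_\cR))\subseteq\cZ(\cA_\cR)$.
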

\begin{proof}
$(1)$ An element $y\in \cA$ is skew-symmetric if and only if $\{1,y,1\}^+=0$. Since $\varphi^+(1)=1$, $y$ is skew-symmetric if and only if $\varphi^-(y)$ is skew-symmetric.
	
$(2)$ Let $y\in\cS(\cA,\inv)$. We have that:
\[\{y,1,z\}^-=yz-zy.\]
Then, in view of Lemma \ref{lemma.HermitianSkewHermitianCubed} $z\in\cZ(\cA,\inv)$ if and only if $\{\cS(\cA,\inv),1,z\}^-=0$. Using $(1)$, it follows that  $z\in\cZ(\cA,\inv)$ if and only if $\varphi^-(z)\in\cZ(\cA,\inv)$. In a simpler way, using $\{1,y,z\}^+$, we can prove $z\in\cZ(\cA,\inv)$ if and only if $\varphi^+(z)\in\cZ(\cA,\inv)$.
\end{proof}

\begin{lemma}\label{lemma.AutKantor}
Let $\chr(\FF)=3$, $\cA=\FF\oplus \FF$ and $\inv$ the exchange involution. Let $\varphi=(\varphi^+,\varphi^-)\in \GL_2(\cR)\times\GL_2(\cR) $. Recall that $n(x,y)=x\bar{y}+y\bar{x}$ is the polar form of the norm of $(\cA, \inv)$ as a Hurwitz algebra. Then $\varphi\in \bAut(\cV_{\cA})(\cR)$ if and only if $n(\varphi^+(x),\varphi^-(y))=n(x,y)$ for all $x,y\in\cA_{\cR}$.
\end{lemma}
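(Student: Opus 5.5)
The approach is to compute the triple product of $\cV_\cA$ explicitly. Since $\cA = \FF\oplus\FF$ is commutative and associative, and $\chr\FF = 3$, it will turn out to be a scalar multiple of the third argument, with the scalar given by $n$. Once that formula is in hand, the automorphism condition reduces immediately to preservation of $n$.

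\emph{Step 1: the triple product.} For $x,y,z\in\cA_\cR$, commutativity and associativity give
$$\{x,y,z\} = (x\bar y)z + (z\bar y)x - (z\bar x)y = \bigl(2x\bar y - \bar x y\bigr)z .$$
In characteristic $3$ we have $2=-1$, so
$$\{x,y,z\} = -(x\bar y + \bar x y)z = -n(x,y)\,z .$$
Moreover $n(x,y) = x\bar y + \overline{x\bar y}$ is hermitian. The hermitian elements of $(\cA_\cR,\ex)$ are exactly the pairs $(r,r)$, so $n(x,y)\in\cR 1$ is a scalar of $\cR$. Note also that $n(x,y)=n(y,x)$. The same formula holds in both components $\cV^\pm$, since the products coincide.

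\emph{Step 2: the equivalence.} By definition, $\varphi\in\bAut(\cV_\cA)(\cR)$ if and only if $\varphi^\sigma(\{x,y,z\}^\sigma) = \{\varphi^\sigma x,\varphi^{-\sigma}y,\varphi^\sigma z\}^\sigma$ for $\sigma=\pm$. By Step 1 and $\cR$-linearity, this reads
$$n(x,y)\,\varphi^\sigma(z) = n(\varphi^\sigma x,\varphi^{-\sigma}y)\,\varphi^\sigma(z)\quad\text{for all } x,y,z .$$
If $n$ is preserved in the sense of the statement, then by the symmetry of $n$ both the $\sigma=+$ and $\sigma=-$ identities hold, so $\varphi$ is an automorphism. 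Conversely, if $\varphi$ is an automorphism, $\varphi^+$ is invertible, so we may pick $z$ with $\varphi^+(z)=1$. The $\sigma=+$ identity then gives $n(x,y)1 = n(\varphi^+x,\varphi^-y)1$, hence $n(\varphi^+(x),\varphi^-(y))=n(x,y)$.

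The only point requiring care is Step 1: we must check that $n(x,y)$ is genuinely a scalar of $\cR$, so that it can be pulled through the $\cR$-linear maps $\varphi^\sigma$. We must also make sure the characteristic-$3$ identity $2=-1$ is used correctly, since in other characteristics the product does not collapse to a multiple of $z$. Everything else is formal.
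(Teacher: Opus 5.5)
Your proof is correct and follows the paper's argument: both use commutativity, associativity and $2=-1$ to get $\{x,y,z\}^\sigma=-n(x,y)\,z$, and then apply $\varphi^\sigma$. You also fill in the steps the paper calls trivial: that $n(x,y)$ is hermitian and hence lies in $\cR 1$, that the symmetry of $n$ handles the case $\sigma=-$, and that choosing $z$ with $\varphi^+(z)=1$ gives the converse.
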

\begin{proof}
Since $\cA_{\cR}$ is conmmutative and associative, we have that:
\[\{x,y,z\}^{\sigma}=-z(x\bar{y}+y\bar{x})=-zn(x,y).\]
From here the result follows trivially just by applying $\varphi^{\sigma}$.
\end{proof}

\begin{corollary}\label{corollary.AutDim2}
Let $\chr(\FF)=3$, $\cA=\FF\oplus \FF$ and $\inv$ the exchange involution.
Let $\varphi=(\varphi^+,\varphi^-)\in\GL_2(\cR)\times \GL_2(\cR)$.  Then, $\varphi\in \Stab_{\bAut(\cV_{\cA})(\cR)}(1^+)$ if and  only if there are $\beta\in \cR^{\times}$ and $\alpha\in \cR$ such that $\varphi^+(1)=1$, $\varphi^+(s)=\alpha 1 + \beta s$, $\varphi^-(1)=1+\frac{\alpha}{\beta}s$, $\varphi^-(s)=\frac{1}{\beta}s$, where $s=(1_\FF,-1_\FF)$.
\end{corollary}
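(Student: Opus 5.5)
The plan is to combine Lemma~\ref{lemma.AutKantor} with the basis $\{1,s\}$ of $\cA_\cR=\cR 1\oplus\cR s$, where $s=(1_\FF,-1_\FF)$. We have $\bar 1=1$, $\bar s=-s$ and $s^2=1$, so
\[
n(1,1)=2,\qquad n(1,s)=n(s,1)=0,\qquad n(s,s)=-2 .
\]
By Lemma~\ref{lemma.AutKantor}, $\varphi$ is an automorphism if and only if $\varphi^-$ is the adjoint-inverse of $\varphi^+$ with respect to the bilinear form $n$ (nondegenerate over $\cR$, since $2$ is invertible). Thus $\varphi^+$ determines $\varphi^-$ uniquely, and we only need to impose the four conditions $n(\varphi^+(u),\varphi^-(v))=n(u,v)$ for $u,v\in\{1,s\}$.

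For the forward direction, assume $\varphi^+(1)=1$ and write $\varphi^+(s)=\alpha 1+\beta s$, $\varphi^-(1)=\gamma 1+\delta s$, $\varphi^-(s)=\epsilon 1+\zeta s$ with coefficients in $\cR$. By bilinearity and the values of $n$ above, the four conditions read:
\[
n(1,\varphi^-(1)) = 2\gamma = 2,\qquad n(1,\varphi^-(s)) = 2\epsilon = 0,
\]
\[
n(\varphi^+(s),\varphi^-(1)) = 2\alpha\gamma - 2\beta\delta = 0,\qquad n(\varphi^+(s),\varphi^-(s)) = 2\alpha\epsilon - 2\beta\zeta = -2.
\]
Since $2\in\cR^\times$, these give $\gamma=1$, $\epsilon=0$, $\beta\zeta=1$ and $\beta\delta=\alpha$. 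Hence $\beta\in\cR^\times$, $\zeta=\beta^{-1}$ and $\delta=\alpha/\beta$, which is exactly the stated form.

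For the converse, the same four identities verified in reverse show that the given pair preserves $n$ on the basis, hence on all of $\cA_\cR$ by bilinearity. Both maps are invertible: $\varphi^+$ is triangular in the basis $\{1,s\}$ with diagonal entries $1$ and $\beta\in\cR^\times$, and similarly for $\varphi^-$. So $\varphi\in\GL_2(\cR)\times\GL_2(\cR)$, Lemma~\ref{lemma.AutKantor} applies, and $\varphi$ fixes $1^+$.

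No step is a real obstacle; the only point needing care is the bookkeeping of signs coming from $\bar s=-s$ in the polar form.
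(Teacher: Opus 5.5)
Your proposal is correct and follows essentially the same route as the paper: apply Lemma~\ref{lemma.AutKantor}, write $\varphi^\pm$ in the basis $\{1,s\}$, and read off the coefficients of $\varphi^-$ from the polar form $n$ evaluated on the basis elements. The differences are cosmetic. You obtain $\beta\in\cR^\times$ from the relation $\beta\zeta=1$, whereas the paper gets it from the invertibility of $\varphi^+$ (its matrix in $\{1,s\}$ is triangular with determinant $\beta$). You also write out the computation for $\varphi^-(s)$ that the paper leaves as ``similarly''.
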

\begin{proof}
	Since  $\varphi\in \Stab_{\bAut(\cV_{\cA})(\cR)}(1^+)$ we have  $\varphi^+(1)=1$, $\varphi^+(s)=\alpha 1 + \beta s$ for some $\alpha,\beta\in \cR$. Since $\varphi^+$ is invertible, then $\beta$ must be invertible. Now we can recover $\varphi^-$ using the polar form of the norm. Let $\lambda,\gamma \in \cR$ be such that $\varphi^-(1)=\lambda 1 +\gamma s$. Then, by Lemma \ref{lemma.AutKantor}, we have:
	
	\[2=n(1,1)=n(1,\lambda 1 +\gamma s)=2\lambda,\]
	from where we get $\lambda=1$ and
	\[0=n(s,1)=n(\alpha 1+\beta s,1+\gamma s)=2\alpha-2\beta\gamma,\]
	which implies $\gamma=\frac{\alpha}{\beta}$ so $\varphi^-(1)=1+\frac{\alpha}{\beta}s$. Similarly, we get the result for $s$. The converse follows by Lemma~\ref{lemma.AutKantor}.
\end{proof}

\begin{proposition}\label{proposition.invertToInvert}
Let $\cA := \cM_n(\FF) \oplus \cM_n(\FF)^{op}$, $\inv$ the exchange involution of $\cA$ and $\cV$  the Kantor pair associated to $(\cA, \inv)$. Let $\cR$ be an associative commutative unital algebra and $\varphi\in \bAut(\cV)(\cR)$. Then
 $\varphi^\sigma(\cA^\times_\cR) \subseteq \cA^\times_\cR$. In particular $\varphi^\sigma(1)$ is invertible.
\end{proposition}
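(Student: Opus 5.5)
The plan is to characterize invertibility in $\cA_\cR$ through the Kantor pair structure, namely by conjugate invertibility, and then use that automorphisms of $\cV$ preserve conjugate invertibility. First, extend Lemma~\ref{lemma.inverse} to the unitary case: for $\cA=\cM_n(\FF)\oplus\cM_n(\FF)^{op}$ with the exchange involution, an element $x\in\cA_\cR$ is conjugate invertible if and only if it is invertible, and then $\hat x=\bar x^{-1}$.

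The argument copies the proof of Lemma~\ref{lemma.inverse}. Suppose $V_{x,y}=\id$. Evaluating $V_{x,y}$ on an arbitrary $z$ and at $z=1$ shows that $a:=1-x\bar y=\bar y x-\bar x y$ commutes with all of $\cA_\cR$. Hence $a\in\cZ(\cA_\cR)=\KK_\cR$, and $a$ is skew-symmetric. Here the center is $\cR(\mathrm I,0)\oplus\cR(0,\mathrm I)$, and the exchange involution swaps the two summands. So $\KK_\cR\cap\cS(\cA_\cR,\inv)=\cR(1,-1)$, which is not zero, and $a=r(1,-1)$ for some $r\in\cR$.

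This is the main obstacle: one must show $r=0$. To do so, write $x=(x_1,x_2)$ and $y=(y_1,y_2)$. Recall that $\bar y=(y_2,y_1)$, and that the product in the second component is the opposite one. The identity $1-x\bar y=a$ gives $\mathrm I-x_1y_2=r\mathrm I$ in the first component and $\mathrm I - y_1 x_2 = -r\mathrm I$ in the second component (using the opposite product). The identity $a=\bar yx-\bar xy$ gives $y_2x_1-x_2y_1=r\mathrm I$ in the first component. Next take the trace, or better use $x_1y_2=(1-r)\mathrm I$, $y_1x_2=(1+r)\mathrm I$, and the symmetric relation $V_{y,x}=\id$ as well. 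Alternatively, apply the trace identities $\operatorname{tr}(x_1y_2)=\operatorname{tr}(y_2x_1)$, which give $n(1-r)-n(1+r)=nr$, that is, $3nr=0$. This fails in characteristic $3$ or when $3\mid n$. So I would instead extract more from $V_{x,y}(z)=z$ for general $z$. Writing $V_{x,y}(z)=x\bar y z+z\bar y x-z\bar x y$ componentwise, $(1-r)z+z(y_2x_1-x_2y_1)$ should combine with $x_1y_2=(1-r)\mathrm I$ to force $y_2x_1=(1-r)\mathrm I$ over the commutative ring $\cR$, since a one-sided inverse of a square matrix up to a scalar is two-sided. Combined with the analogous relation $x_2y_1=(1+r)\mathrm I$, this gives $r\mathrm I=(1-r)\mathrm I-(1+r)\mathrm I=-2r\mathrm I$, so $3r=0$. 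The characteristic-$3$ case is the genuinely delicate point, presumably the reason for Lemma~\ref{lemma.Stab1PreserveCenter} and Corollary~\ref{corollary.AutDim2}. There I would also use $V_{y,x}=\id$, which yields $1-y\bar x=r'(1,-1)$, and compare determinants: $\det(x_1)\det(y_2)=(1-r)^n$ and $\det(x_2)\det(y_1)=(1+r)^n$. If that still fails, I would fall back on the invertibility argument below, which does not need $r=0$.

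Indeed, for invertibility alone we do not need $r=0$. It suffices to show that $1-r$ and $1+r$ are units, since then $x_1$ and $x_2$ are invertible matrices. To see this, I would reduce modulo each maximal ideal $\mathfrak m$ of $\cR$, working over the field $\cR/\mathfrak m$, and check that the image of $r$ is not $\pm1$. If, say, $r\equiv1$, then $x_1y_2\equiv 0$ and $y_1x_2\equiv 2\mathrm I$, so $y_2x_1\equiv0$ as well. Plugging a rank-one $z$ into the $V_{x,y}$ identity then gives a contradiction. For the converse direction, every invertible $x$ is conjugate invertible with conjugate inverse $\bar x^{-1}$, by the direct computation $V_{x,\bar x^{-1}}(z)=z$ in the associative algebra.

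Finally, an automorphism $\varphi\in\bAut(\cV)(\cR)$ satisfies $V_{\varphi^\sigma x,\varphi^{-\sigma}y}=\varphi^\sigma V_{x,y}(\varphi^\sigma)^{-1}$, and likewise with the roles of $x$ and $y$ reversed. Hence $\varphi$ maps conjugate invertible elements of $\cV^\sigma$ to conjugate invertible elements. By the extended lemma, this gives $\varphi^\sigma(\cA_\cR^\times)\subseteq\cA_\cR^\times$, and in particular $\varphi^\sigma(1)$ is invertible.
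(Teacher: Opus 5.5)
There is a genuine gap, and it is exactly the step you flagged: in characteristic $3$ your extension of Lemma~\ref{lemma.inverse} is false for the exchange involution, for every $n$. So the strategy of transporting invertibility through conjugate invertibility cannot work there. Take $x=(0,\mathrm{I}_n)$ and $y=(-\mathrm{I}_n,0)$. All the relevant products are central: $x\bar y=\bar y x=(0,-\mathrm{I}_n)$ and $\bar x y=y\bar x=(-\mathrm{I}_n,0)$. Hence $V_{x,y}(z_1,z_2)=(z_1,-2z_2)$ and $V_{y,x}(z_1,z_2)=(-2z_1,z_2)$. If $\chr\FF=3$ both maps are the identity, so $x$ is conjugate invertible (with $r=1$ in your notation) but not invertible.

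Your fallback cannot rescue this. Once $a=1-x\bar y=\bar yx-\bar xy$ is central, $V_{x,y}(z)=(1-a)z+za=z$ holds for every $z$ automatically. So plugging in a rank-one $z$ gives nothing beyond the $z=1$ equation, and the example above satisfies that equation. A further warning sign is that for $n=1$ and $\chr\FF=3$ the statement is actually false. By Lemma~\ref{lemma.AutKantor}, $\varphi^+$ can be any element of $\GL_2(\cR)$, for instance one with $\varphi^+(1)=(1,0)$. So a correct proof must use $n>1$ somewhere, and yours never does.

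In characteristic $\neq 3$ your route does go through, even for $n=1$, which the paper's argument does not reach. It needs a corrected computation, though. Over a residue field, $\bar r=1$ gives $x_1y_2=0$ and $y_1x_2=2\mathrm{I}$, hence $y_2x_1=r\mathrm{I}+x_2y_1=3\mathrm{I}$, not $0$ as you wrote. This is invertible and contradicts $x_1y_2=0$. The case $\bar r=-1$ is similar, so $1\pm r\in\cR^\times$ and $x_1,x_2$ are invertible.

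The paper avoids conjugate invertibility entirely and instead expresses the left ideal generated by $y$ in Kantor-pair terms. With $[x,y,z]^\sigma:=\{x,y,z\}^\sigma-\{z,y,x\}^\sigma=(x\bar z-z\bar x)y$, one gets $[\cA_\cR,y,\cA_\cR]=\cS(\cA_\cR,\inv)y$. Three nested brackets then give $\cS(\cA_\cR,\inv)^3y=\cA_\cR y$ by Lemma~\ref{lemma.HermitianSkewHermitianCubed}; this is where $n>1$ is used. Applying $\varphi$ to this identity for invertible $y\in\cV^{-\sigma}$ yields $\cA_\cR=\cA_\cR\,\varphi^{-\sigma}(y)$. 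So $\varphi^{-\sigma}(y)$ has a left inverse and is therefore invertible, uniformly in every characteristic $\neq 2$.
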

\begin{proof} Without loss of generality, we can assume $\sigma = +$. Consider the maps
\begin{align*}
& [\cdot,\cdot,\cdot]^{\sigma}\colon \cV_\cR^{\sigma}\times \cV_\cR^{-\sigma}\times \cV_\cR^{\sigma}\to \cV_\cR^{\sigma}, \\
& [x,y,z]^{\sigma} := \{x,y,z\}^{\sigma}-\{z,y,x\}^{\sigma}=(x\overline{z}-z\overline{x})y.
\end{align*}
Then
$$ \cS(\cA_\cR,\inv)y=[\cA_\cR,y,1]\subseteq[\cA_\cR,y,\cA_\cR]\subseteq\cS(\cA_\cR,\inv)y, $$
which implies that 
\begin{equation}\label{newProd.SkewSim}
[\cA_\cR,y,\cA_\cR]=\cS(\cA_\cR,\inv)y.
\end{equation}
Besides $[\varphi^{\sigma}(x),\varphi^{-\sigma}(y),\varphi^{\sigma}(z)]^{\sigma}=\varphi^{\sigma}([x,y,z]^{\sigma})$ for all $x,z\in\cV_\cR^{\sigma}$ and $y\in \cV_\cR^{-\sigma}$, and notice that due to \eqref{newProd.SkewSim} and Lemma \ref{lemma.HermitianSkewHermitianCubed} we have
\begin{equation} \label{eq.trescorchetes}
\Big[ \cA_\cR, \big [\cA_\cR, [\cA_\cR,y,\cA_\cR]^{\sigma}, \cA_\cR \big]^{-\sigma}, \cA_\cR \Big]^{\sigma}
= \cS(\cA_\cR,\inv)^3y = \cA_\cR y.
\end{equation}
If we apply $\varphi^-$ to \eqref{eq.trescorchetes} with $y \in \cA^\times_\cR$, we get
$$ \cA_\cR = \varphi^-(\cA_\cR y) = \cA_\cR\varphi^+(y). $$
Thus, if $\varphi^{+}(y)=(A,B)$ with $A,B\in \cM_{n}(\cR)$, it follows that $A$ and $B$ are invertible and this implies that $\varphi^+(y)$ is invertible.
\end{proof}

\begin{lemma}\label{lemma.OrthogonalIdempotent}
Suppose that $\chr(\FF)=3$ and let $\cR$ be an associative commutative unital $\FF$-algebra.
Let $a,b\in \cR$ be such that $a+b=1$ and $a^2b=ab^2=0$. Then $a$ and $b$ are ortogonal idempotents.
\end{lemma}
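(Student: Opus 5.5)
The plan is a direct algebraic manipulation inside $\cR$, using only $a+b=1$ and the two relations $a^2b=0=ab^2$. The goal is to show $ab=0$. Once this holds, $a=a(a+b)=a^2+ab=a^2$, and likewise $b=b^2$, so $a$ and $b$ are orthogonal idempotents.

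To obtain $ab=0$, I will expand a suitable power of $a+b=1$ and use the characteristic. Cubing gives
\[
1=(a+b)^3=a^3+3a^2b+3ab^2+b^3=a^3+b^3,
\]
where the mixed terms vanish both because $\chr\FF=3$ and because $a^2b=ab^2=0$. On the other hand,
\[
ab=ab\cdot 1=ab(a^3+b^3)=a^4b+ab^4 .
\]
Here $a^4b=a^2(a^2b)=0$ and $ab^4=(ab^2)b^2=0$, so $ab=0$.

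A more direct route avoids the characteristic altogether:
\[
ab=ab(a+b)=a^2b+ab^2=0 .
\]
This already gives $ab=0$ in any characteristic. So the characteristic-$3$ hypothesis appears to be inessential, and I will use this one-line argument, keeping the cubic computation only as a cross-check. I expect no real obstacle.

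With $ab=0$ in hand, the idempotency follows as above: $a^2=a(1-b)=a-ab=a$ and $b^2=b-ab=b$. Together with $ab=0$ and $a+b=1$, this shows $a,b$ form a complete pair of orthogonal idempotents in $\cR$, as required.
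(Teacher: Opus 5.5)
Your proof is correct, and your main route differs from the paper's. The paper works through powers. It expands $(a+b)^3=a^3+b^3=1$, deduces $a^2=a^2(a+b)=a^3$ and $b^2=b^3$, and hence gets $a^2+b^2=1$. Comparing this with $1=(a+b)^2=a^2+b^2+2ab$ gives $2ab=0$, and this yields $ab=0$ only because $2$ is invertible in $\cR$. Idempotency then follows exactly as in your last step.

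Your one-line computation $ab=ab(a+b)=a^2b+ab^2=0$ avoids all of this. It uses neither $\chr\FF=3$ nor $\chr\FF\neq 2$, so it proves the statement in an arbitrary commutative unital ring. In particular, the characteristic hypothesis in the lemma is superfluous; it is there only because of where the lemma is used, namely the characteristic-$3$ case of Lemma~\ref{lemma.Stab1}.

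You also correctly note that the cube expansion needs no characteristic assumption once $a^2b=ab^2=0$. Your cubic cross-check, $ab=ab(a^3+b^3)=a^4b+ab^4=0$, is closer in spirit to the paper, but it still avoids dividing by $2$.

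Your approach is shorter and more general. The paper's argument gains nothing extra here beyond fitting the ambient characteristic-$3$ setting.
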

\begin{proof}
	$(a+b)^3=a^3+b^3$. Therefore, $a^3+b^3=1$. Now, $a^2=a^2(a+b)=a^3$. Similarly, $b^2=b^3$. Thus, $a^2+b^2=1$. Now, $1=(a+b)^2=a^2+b^2+2ab=1+2ab$. Therefore $ab=0$. Finally, $a=a(a+b)=a^2$ and $b=b(a+b)=b^2$. Therefore, $a$ and $b$ are ortogonal idempotents.
\end{proof}

\begin{lemma}\label{lemma.ImageOfUe1}
	Let $\chr(\FF)=3$, $\cA=\cM_n(\FF)\oplus \cM_n(\FF)^{op}$ and $\inv$ the exchange involution. Let $r_1,r_2\in \cR$ be such that $r_1+r_2=1$ and $\lambda\in \cR^{\times}$. Then \[\cH(\cA,\inv)(r_1e_1+ r_2e_2)\subseteq U_{\lambda r_1e_1+\lambda r_2e_2}(\cA)\text{, and  }\cS(\cA,\inv)(r_1e_1+ r_2e_2)\subseteq U_{\lambda r_1e_1-\lambda r_2e_2}(\cA).\] where $e_1=(\mathrm{I}_n,0)$, $e_2=(0,\mathrm{I}_n)$ and $\sigma=\pm$.
\end{lemma}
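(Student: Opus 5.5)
The plan is to verify both inclusions by an explicit computation in $\cA_\cR = \cM_n(\cR)\oplus\cM_n(\cR)^{op}$. Given a hermitian or skew-symmetric $h$, we exhibit a preimage $x$ with $U_u(x) = h(r_1e_1+r_2e_2)$, taking $x$ to be a suitable scalar multiple of $h$ itself. The only facts needed are that $u$ is central and that $2=-1$ in characteristic $3$. The hypothesis $r_1+r_2=1$ will be used exactly once in each case.

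First we make the $U$-operator explicit for central $u$. For any $u$ and $x$,
\[U_u(x)=V_{u,x}(u)=2\,u\bar{x}u-u\bar{u}x .\]
Write elements as pairs $(X,Y)$. The product is $(X,Y)(X',Y')=(XX',Y'Y)$, and the involution is $\overline{(X,Y)}=(Y,X)$. The element $u=(\alpha\mathrm{I}_n,\beta\mathrm{I}_n)$ with $\alpha,\beta\in\cR$ is central. Then $u\bar{u}=\alpha\beta\,1$ and $u\bar x u=u^2\bar x$, so for $x=(X,Y)$ we get
\[U_u(x)=\bigl(2\alpha^2Y-\alpha\beta X,\;2\beta^2X-\alpha\beta Y\bigr).\]

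For the hermitian case, hermitian elements are $h=(H,H)$, so $h(r_1e_1+r_2e_2)=(r_1H,r_2H)$. We take $\alpha=\lambda r_1$, $\beta=\lambda r_2$ and the ansatz $x=c(H,H)$. The first component becomes $c\lambda^2r_1(2r_1-r_2)H$. Since $\chr\FF=3$ and $r_1+r_2=1$, we have $2r_1-r_2=-(r_1+r_2)=-1$. The second component becomes $c\lambda^2r_2(2r_2-r_1)H=-c\lambda^2r_2H$ by the same argument. Hence $c=-\lambda^{-2}$ works, which uses $\lambda\in\cR^\times$.

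For the skew case, skew elements are $s=(S,-S)$, and $s(r_1e_1+r_2e_2)=(r_1S,-r_2S)$. We take $\alpha=\lambda r_1$, $\beta=-\lambda r_2$ and $x=c(S,-S)$. The first component becomes $c\lambda^2r_1(r_2-2r_1)S=c\lambda^2r_1S$. The second component becomes $c\lambda^2r_2(2r_2-r_1)S=-c\lambda^2r_2S$. So $c=\lambda^{-2}$ gives exactly $(r_1S,-r_2S)$.

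There is no real obstacle here. The only care needed is in the sign and order conventions of the opposite factor and of $\overline{x}$ in $U_u$, and in noticing that the coefficients $2r_i-r_j$ collapse to $-1$ precisely because $\chr\FF=3$ and $r_1+r_2=1$. The sign $\sigma$ plays no role, because the triple product is the same in both components of $\cV$.
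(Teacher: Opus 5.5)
Your proposal is correct and follows essentially the same route as the paper. Both arguments take the preimage to be $-\lambda^{-2}h$ in the hermitian case and $\lambda^{-2}s$ in the skew case, then verify $U_u$ directly using that $u$ is central, that $2=-1$ in characteristic $3$, and that $r_1+r_2=1$. The only difference is cosmetic: you compute componentwise in $\cM_n(\cR)\oplus\cM_n(\cR)^{op}$, while the paper writes the same computation with the central idempotents $e_1,e_2$.
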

\begin{proof}
 Take $x\in\cH(\cA,\inv)$, then
 \begin{equation*}
 	\begin{aligned}
 		U_{\lambda r_1e_1+\lambda r_2e_2}(-\lambda^{-2}x)&=2(\lambda r_1 e_1+\lambda r_2e_2)^2\overline{(-\lambda^{-2}x)}-(\lambda r_1 e_1+\lambda r_2e_2)\overline{(\lambda r_1 e_1+\lambda r_2e_2)}(-\lambda^{-2}x)\\
 		&=(r_1^2e_1+r_2^2e_2)x+(r_1r_2e_1+r_1r_2e_2)x\\
 		&=[r_1(r_1+r_2)e_1+r_2(r_1+r_2)e_2]x\\
 		&=(r_1e_1+r_2e_2)x.
\end{aligned}
\end{equation*}
Similarly, for $x\in\cS(\cA,\inv)$,
  \begin{equation*}
 	\begin{aligned}
 		U_{\lambda r_1e_1-\lambda r_2e_2}(\lambda^{-2}x)&=2(\lambda r_1 e_1-\lambda r_2e_2)^2\overline{\lambda^{-2}x}-(\lambda r_1 e_1-\lambda r_2e_2)\overline{(\lambda r_1 e_1-\lambda r_2e_2)}\lambda^{-2}x\\
 		&=(r_1^2e_1+r_2^2e_2)x+(r_1r_2e_1+r_1r_2e_2)x\\
 		&=[r_1(r_1+r_2)e_1+r_2(r_1+r_2)e_2]x\\
 		&=(r_1e_1+r_2e_2)x.
 	\end{aligned}
 \end{equation*}
\end{proof}

\begin{lemma}\label{lemma.Stab1}
Let $\cA := \cM_n(\FF) \oplus \cM_n(\FF)^{op}$ with $n>1$ and $\inv$ the exchange involution of $\cA$. Then
$$ \Stab_{\bAut(\cV_{\cA})(\cR)}(1^+)=\Stab_{\bAut(\cV_{\cA})(\cR)}(1^+,1^-). $$
\end{lemma}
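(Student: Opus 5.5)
The plan is to show that every $\varphi=(\varphi^+,\varphi^-)\in\Stab_{\bAut(\cV_\cA)(\cR)}(1^+)$ satisfies $\varphi^-(1)=1$. The reverse inclusion is trivial.

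\textbf{Step 1: reduce to a central skew correction.} Automorphisms preserve conjugate inverses. Since $1^-$ is a conjugate inverse of $1^+$ and $\varphi^+(1)=1$, the element $y:=\varphi^-(1)$ is again a conjugate inverse of $1$. I will solve $V_{1,y}=\id=V_{y,1}$ in $\cA_\cR$ directly, as in Lemma~\ref{lemma.inverse}.
\begin{itemize}
\item Evaluating $V_{1,y}$ at $z=1$ gives $2\bar y-y=1$.
\item Evaluating $V_{1,y}$ at general $z$ gives $(1-\bar y)z=z(\bar y-y)$.
\item Taking $z=1$ in the last identity shows that $a:=\bar y-y$ is central and skew.
\end{itemize}
Unlike the matrix case, $\cZ(\cA_\cR)\cap\cS(\cA_\cR,\inv)=\cR s$ with $s=e_1-e_2\neq0$. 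So we only get $a=rs$, which forces $y=1+rs$ together with $3rs=0$. If $\chr\FF\neq3$ this gives $r=0$ and we are done. If $\chr\FF=3$, then $y=1+rs$ is indeed a conjugate inverse of $1$ for every $r\in\cR$; one checks $V_{y,1}=\id$ as well. So the conjugate-inverse argument alone cannot conclude, and the rest of the proof handles characteristic $3$.

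\textbf{Step 2: restrict to the center (characteristic $3$).} By Lemma~\ref{lemma.Stab1PreserveCenter}, $\varphi^\pm$ preserve $\cZ(\cA_\cR)=\cR e_1\oplus\cR e_2$. Hence $\varphi$ restricts to an automorphism of the Kantor pair of $\KK_\cR\cong\cR\oplus\cR$ that fixes $1^+$. By Corollary~\ref{corollary.AutDim2} there are $\alpha\in\cR$ and $\beta\in\cR^\times$ with
\[
\varphi^+(s)=\alpha1+\beta s,\qquad \varphi^-(1)=1+\tfrac{\alpha}{\beta}s .
\]
So it suffices to prove $\alpha=0$.

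\textbf{Step 3: show $\alpha=0$ (main obstacle).} This uses the noncentral part of $\cA$, which is where $n>1$ enters. Write $\varphi^+(s)=\lambda(r_1e_1-r_2e_2)$ with $\lambda=\beta$, $r_1=1+\alpha/\beta$ and $r_2=1-\alpha/\beta$. In characteristic $3$ these satisfy $r_1+r_2=2=-1$, so I will rescale $\lambda$ by $-1$ (a unit) to get $r_1+r_2=1$.
\begin{enumerate}
\item Compute $U_s(\cA_\cR)$ and $U_1(\cA_\cR)$ in $\cV^+$. Both are all of $\cA_\cR$, and the images split compatibly with $\cH\oplus\cS$.
\item Apply $\varphi^+$ to these and use Lemma~\ref{lemma.ImageOfUe1}. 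We have $U_{\varphi^+(s)}(\varphi^-(\cA_\cR))=\varphi^+(U_s(\cA_\cR))$, and the lemma locates $\cH(r_1e_1+r_2e_2)$ and $\cS(r_1e_1+r_2e_2)$ inside such images.
\item Combine this with Proposition~\ref{proposition.invertToInvert}, which says $\varphi^+$ sends invertibles to invertibles, and with Lemma~\ref{lemma.HermitianSkewHermitianCubed}, which says $\cH^3=\cS^3=\cA$. Comparing the $e_1$- and $e_2$-components, the aim is to extract the relations $r_1^2r_2=r_1r_2^2=0$.
\item Lemma~\ref{lemma.OrthogonalIdempotent} then makes $r_1,r_2$ orthogonal idempotents.
\item Finally, invertibility of $\varphi^+(s)=\lambda(r_1e_1-r_2e_2)$, together with the fact that $\varphi^+$ maps $\cH(\cA_\cR,\inv)$ compatibly, should force $r_1=r_2$. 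With $r_1+r_2=1$ this is $2\alpha/\beta=0$, hence $\alpha=0$, so $\varphi^-(1)=1$.
\end{enumerate}

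The delicate point is item 3 of Step 3: producing the nilpotency relations $r_1^2r_2=r_1r_2^2=0$ from the $U$-operator computations. If that comparison is not quite enough, I would instead apply the same approach to $\varphi^-(s)=\beta^{-1}s$.
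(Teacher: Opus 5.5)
\textbf{There is a genuine gap in Step 3.} Your Steps 1 and 2 are correct. The identity $2\bar y-y=1$ is exactly the paper's $1=\varphi^+\{1,1,1\}=\{1,\varphi^-(1),1\}$, and it already settles $\chr\FF\neq 3$. Step 2 is the paper's reduction via Lemma~\ref{lemma.Stab1PreserveCenter} and Corollary~\ref{corollary.AutDim2}.

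Step 3 is not just unfinished; it aims at false statements. For $\varphi=\id$ you have $\alpha=0$ and $\beta=1$, so after your rescaling $r_1=r_2=-1$ and $r_1^2r_2=-1\neq 0$. The relations in item 3 therefore cannot be derived. Items 3--5 are in fact mutually inconsistent:
\begin{itemize}
\item Orthogonal idempotents with $r_1=r_2$ and $r_1+r_2=1$ give $r_1=r_1^2=r_1r_2=0$, hence $1=0$.
\item Invertibility of $\varphi^+(s)=(\lambda r_1\mathrm{I}_n,-\lambda r_2\mathrm{I}_n)$ makes $r_1,r_2$ units, which is again incompatible with $r_1r_2=0$.
\end{itemize}
So the plan would show that the stabilizer is empty whenever $\cR\neq 0$.

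Items 1--2 also carry no usable information. In characteristic $3$, $U_1(x)=2\bar x-x$ and $U_s(x)=2\bar x+x$. Writing $\cH,\cS$ for $\cH(\cA_\cR,\inv),\cS(\cA_\cR,\inv)$, this gives $U_1(\cA_\cR)=\cH$ and $U_s(\cA_\cR)=\cS$, not all of $\cA_\cR$ as you claim. What comes out is $\varphi^+(\cH)=\cH$, $\varphi^+(\cS)=U_{\varphi^+(s)}(\cA_\cR)$, and, from your fallback, $\varphi^-(\cS)=U_{\beta^{-1}s}(\cA_\cR)=\cS$. All of these hold equally for $n=1$ for the automorphisms of Corollary~\ref{corollary.AutDim2} with $\alpha\neq 0$, where the lemma is false. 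So they cannot force $\alpha=0$; the argument must exploit noncommuting matrix units, and your proposal never says how.

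\textbf{The missing idea} is to work with the \emph{non-invertible} central idempotents $e_1,e_2$ rather than with $s$. Corollary~\ref{corollary.AutDim2} gives
\begin{itemize}
\item $\varphi^-(e_1)=t_1e_1+t_2e_2$ and $\varphi^+(e_1)=\beta t_1e_1-\beta t_2e_2$;
\item $\varphi^-(e_2)=r_1e_1+r_2e_2$ and $\varphi^+(e_2)=-\beta r_1e_1+\beta r_2e_2$;
\end{itemize}
with $t_1+t_2=r_1+r_2=1$. These are the paper's $r_i$, not yours, and they are the coefficients that turn out to be orthogonal idempotents (for $\varphi=\id$, $t_1=r_2=1$ and $t_2=r_1=0$).

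The argument then runs as follows.
\begin{enumerate}
\item $U_{e_1}(\cA_\cR)=\cA_\cR e_1$, and $\{x,y,x\}=0$ for all $x,y\in\cA_\cR e_1$.
\item Lemma~\ref{lemma.ImageOfUe1}, applied to $\varphi^-(e_1)$ and $\varphi^+(e_1)$, yields $x,y\in\cA_\cR e_1$ with $\varphi^-(x)=(t_1E_{11},t_2E_{11})$ and $\varphi^+(y)=(t_1E_{12},-t_2E_{12})$.
\item Hence $0=\varphi^-\{x,y,x\}=(-t_1^2t_2E_{12},0)$. This is where $n>1$ enters.
\item Analogous computations (with $E_{21}$, and with $e_2$ in place of $e_1$) give $t_1t_2^2=r_1^2r_2=r_1r_2^2=0$, so Lemma~\ref{lemma.OrthogonalIdempotent} applies.
\item Finally, $\varphi^+(e_1+e_2)=1$, i.e.\ $\beta(t_1-r_1)=1=\beta(r_2-t_2)$, forces $r_2=t_1$ and $r_1=t_2$. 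Therefore $\varphi^-(1)=(r_1+t_1)e_1+(r_2+t_2)e_2=1$.
\end{enumerate}
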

\begin{proof} Let $\varphi \in \Stab_{\bAut(\cV_{\cA})(\cR)}(1^+)$. By abuse of notation we will write $1$ instead of $1^+$ and $1^-$.

$\bullet$ Case 1: $\chr(\FF)\neq 3$. Let $(a,b) := \varphi^-(1)$. Notice that $1 = \{1,1,1\}$, then $1 = \varphi^+(1) = \varphi^+\{1,1,1\} = \{1,\varphi^-(1),1\} = 2\overline{\varphi^-(1)} - \varphi^-(1) = (2b-a, 2a-b)$, thus $3\cdot 1 = 3a = 3b$. Since $\chr(\FF)\neq 3$, it follows that $a = b = 1$ and $\varphi^-(1) = 1$.

$\bullet$ Case 2: $\chr(\FF)= 3$. Due to Lemma \ref{lemma.Stab1PreserveCenter} we get that $\varphi$ restricts to the subpair $\mathcal{K}=(\cZ(\cA_\cR,\inv),\cZ(\cA_\cR,\inv))$. Denote $s=(\mathrm{I}_n,-\mathrm{I}_n)$, $e_1=(\mathrm{I}_n,0)$ and $e_2=(0,\mathrm{I}_n)$. Due to Corollary \ref{corollary.AutDim2}, we know that there are $\alpha\in \cR$ and $\beta\in \cR^{\times}$ such that $\varphi^+(1)=1$, $\varphi^+(s)=\alpha 1 + \beta s$, $\varphi^-(1)=1+\frac{\alpha}{\beta}s$, $\varphi^-(s)=\frac{1}{\beta}s$. Denote $r_1=-1-\frac{\alpha-1}{\beta}$, $r_2=-1+\frac{\alpha-1}{\beta}$, $t_1=-1-\frac{\alpha+1}{\beta}$ and $t_2=-1+\frac{\alpha+1}{\beta}$. Then we have $\varphi^-(e_1)=t_1e_1+t_2e_2$, $\varphi^-(e_2)=r_1e_1+r_2e_2$, $\varphi^+(e_1)=\beta t_1e_1-\beta t_2e_2$ and $\varphi^+(e_2)=-\beta r_1e_1+\beta r_2e_2$ and we have $r_1+r_2=1=t_1+t_2$.

Since $U_{\varphi^{\sigma}(e_1)}\varphi^{-\sigma}(\cA_\cR)=\varphi^{\sigma}(U_{e_1}(\cA_\cR)) = \varphi^{\sigma}(\cA_\cR e_1)$ for $\sigma=\pm$, using Lemma \ref{lemma.ImageOfUe1} we have:
\begin{align*}
& \cH(\cA_\cR,\inv)(t_1e_1+ t_2e_2)\subseteq U_{ t_1e_1+t_2e_2}(\cA_\cR) = U_{ \varphi^-(e_1)}\varphi^+(\cA_\cR) = \varphi^{-}(\cA_\cR e_1), \\
& \cS(\cA_\cR,\inv)(t_1e_1+ t_2e_2)\subseteq U_{\beta t_1e_1-\beta t_2e_2}(\cA_\cR) = U_{ \varphi^+(e_1)}\varphi^-(\cA_\cR) = \varphi^{+}(\cA_\cR e_1).
\end{align*}
In particular, since $(E_{11},E_{11})\in \cH(\cA_\cR,\inv)$ and $(E_{12},-E_{12})\in \cS(\cA_\cR,\inv)$ there exist $x,y\in \cA_\cR e_1$ such that $\varphi^{-}(x)=(t_1 E_{11},t_2 E_{11})$ and $\varphi^{+}(y)=(t_1 E_{12},-t_2 E_{12})$. 
Then:
\[0=\varphi^- \{x,y,x\}^{-}=(-t_1^2t_2E_{12},0).\]
This shows that $t_1^2 t_2=0$. Analogously we have $t_2^2t_1=r_1^2r_2=r_2^2r_1=0$. Therefore due to Lemma \ref{lemma.OrthogonalIdempotent} we have that $r_1^2=r_1$, $r_2^2=r_2$, $t_1^2=t_1$, $t_2^2=t_2$ and $r_1r_2=t_1t_2=0$.

Since $1=\varphi^+(1)=\varphi^+(e_1+e_2)=\beta(t_1-r_1)e_1+\beta(r_2-t_2)e_2$ we have:
\[\beta(t_1-r_1)=1_\cR=\beta(r_2-t_2).\]
Therefore:
\begin{align*}
& r_2=r_2\beta(t_1-r_1)=\beta r_2t_1=t_1\beta(r_2-t_2)=t_1, \\
& r_1=r_1\beta(r_2-t_2)=-\beta r_1t_2=t_2\beta(t_1-r_1)=t_2.
\end{align*}
Now:
\[\varphi^{-}(1)=\varphi^-(e_1+e_2)=r_1e_1+r_2e_2+t_1e_1+t_2e_2=r_1e_1+r_2e_2+r_2e_1+r_1e_2=e_1+e_2=1\]
\end{proof}

\begin{theorem} \label{automorphismschemes.unitar}
	Let $n \in \NN$, $\cA := \cM_n(\FF) \oplus \cM_n(\FF)^{op}$ and $\inv$ the exchange involution of $\cA$. Let $\cV$ be the Kantor pair associated to $(\cA, \inv)$. Then, there are isomorphisms of affine group schemes:
\begin{align}
	\bAut(\cV) &\simeq \big( \bG_m(\cA) \otimes_{\bG_m} \bIso(\cA,\inv) \big) \rtimes \bmu_2,
	\text{ in case } 1 < n \in \NN, \text{ or } n=1 \text{ and } \chr\FF\neq 3,\\
	\bAut(\cV)&\simeq \bGL_2(\FF) \text{ in case } n=1 \text{ and } \chr\FF= 3.
\end{align}
	
\end{theorem}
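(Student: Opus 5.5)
We follow the strategy of Theorem~\ref{automorphismschemes}. We first construct a morphism $\Phi\colon \bG_m(\cA)\times \bIso(\cA,\inv)\to \bAut(\cV)$ by $\Phi_\cR(a,b)=\widetilde{L}_a\widehat{R}_b$. In the unitary case $\Sim(\cA,\inv)$ is generated by $\Iso$ together with the centre, and central elements are absorbed into $\widetilde{L}_a$. This is why only $\bIso$ appears, and we check that the kernel is again $\bG_m$, embedded as $r\mapsto(r1,r^{-1}1)$. Next we extend $\Phi$ to a morphism from the semidirect product with $\bmu_2$. Here $\bmu_2$ acts through the splitting of \eqref{eq.exactSequenceUnitary} given in Remark~\ref{rem.schemeUnitaryCase}, composed with the inclusion $\iota$ of Remark~\ref{rm:inclusions}. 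Conjugation by the image of the exchange-type automorphism $\theta\colon(x,y)\mapsto(y^\Tr,x^\Tr)$ sends $\widetilde{L}_a\widehat{R}_b$ to $\widetilde{L}_{\theta(a)}\widehat{R}_{\theta(b)}$, since $\theta$ is an automorphism of $(\cA,\inv)$. So the image of $\Phi$ is normalized, and we obtain a morphism $\big(\bG_m(\cA)\otimes_{\bG_m}\bIso(\cA,\inv)\big)\rtimes\bmu_2\to\bAut(\cV)$. Injectivity reduces to showing that no $\widetilde{L}_a\widehat{R}_b$ equals $\theta$ twisted by a nontrivial idempotent decomposition of $\cR$. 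This holds because $\widetilde{L}_a\widehat{R}_b$ fixes each ideal $\cA_\cR e_i$ setwise in $\cV^+$, while $\theta$ swaps $e_1$ and $e_2$.

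For surjectivity, let $\varphi\in\bAut(\cV)(\cR)$. By Proposition~\ref{proposition.invertToInvert}, $x^+=\varphi^+(1)$ is invertible, so $\psi=\widetilde{L}_{(x^+)^{-1}}\varphi$ stabilizes $1^+$. When $n>1$, Lemma~\ref{lemma.Stab1} shows that $\psi$ stabilizes $(1^+,1^-)$. When $n=1$ and $\chr\FF\neq3$, Case 1 of that lemma's proof, which did not use $n>1$, gives the same conclusion. We then want \cite[Proposition~3.1]{AC21} to identify this stabilizer with $\bAut(\cA,\inv)$. Its hypothesis is that $\cH(\cA,\inv)\cup\cS(\cA,\inv)^2$ generates $\cA$, which follows from Lemma~\ref{lemma.HermitianSkewHermitianCubed} for $n>1$ and is trivial for $n=1$, since $\cH(\cA,\inv)=\cA$ there. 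With $\psi\in\bAut(\cA,\inv)(\cR)$, Remark~\ref{rem.schemeUnitaryCase} lets us write $\psi$ as $\iota(\phi)$ times the image of some $r\in\bmu_2(\cR)$. By Remark~\ref{rem.schemes}, after a faithfully flat extension $\phi=\gamma_c$ with $c\in\GL_n$. The element $(c,(c^{-1})^{op})$ lies in $\Iso(\cA,\inv)$ and induces the same conjugation, so $\iota(\phi)=\widetilde{L}_{u}\widehat{R}_{u^{-1}}$ with $u$ an isometry. The sheaf property of quotients \cite[\S~15.5]{W79} then gives that the morphism is an epimorphism, which proves the first isomorphism.

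The case $n=1$, $\chr\FF=3$ is handled directly. By Lemma~\ref{lemma.AutKantor}, $\bAut(\cV)(\cR)$ consists of the pairs $(\varphi^+,\varphi^-)$ with $n(\varphi^+x,\varphi^-y)=n(x,y)$. The polar form $n$ is nondegenerate on $\cR^2$, since $\chr\FF\neq2$. So $\varphi^-$ is uniquely determined by $\varphi^+$ as its adjoint-inverse with respect to $n$, and every $\varphi^+\in\GL_2(\cR)$ occurs. The map $\varphi\mapsto\varphi^+$ is therefore an isomorphism $\bAut(\cV)\simeq\bGL_2$.

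The main obstacle is the semidirect-product bookkeeping at the level of schemes, namely showing that the $\bmu_2$ factor and the central product intersect trivially functorially. The idempotent argument above (via Lemma~\ref{lemma.OrthogonalIdempotent}-type reasoning on how $\varphi^+$ permutes $e_1,e_2$) handles this. Concretely, we decompose $\cR$ according to whether $\varphi^+$ preserves or swaps the ideals $\cA_\cR e_i$, and we use Lemma~\ref{lemma.Stab1PreserveCenter} to see that the centre is preserved.
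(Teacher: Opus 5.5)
\textbf{Verdict: genuine gap (local, and easily repaired).} Your overall route is the paper's. You build a morphism from $(\bG_m(\cA)\times\bIso(\cA,\inv))\rtimes\bmu_2$. Surjectivity comes from Proposition~\ref{proposition.invertToInvert}, Lemma~\ref{lemma.Stab1}, \cite[Proposition~3.1]{AC21}, Remarks~\ref{rem.schemes} and~\ref{rem.schemeUnitaryCase}, and the sheaf property. The case $n=1$, $\chr\FF=3$ is handled by Lemma~\ref{lemma.AutKantor}. However, your justification that $\Stab(1^+,1^-)=\bAut(\cA,\inv)$ is wrong when $n=1$.

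You say the generation hypothesis is trivial there because $\cH(\cA,\inv)=\cA$. In fact, for $\cA=\FF\oplus\FF$ with the exchange involution, $\cH(\cA,\inv)=\FF1$ and $\cS(\cA,\inv)=\FF s$ with $s=(1,-1)$ and $s^2=1$. So $\cH(\cA,\inv)\cup\cS(\cA,\inv)^2$ generates only $\FF1$.

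Your argument is also characteristic-free, so it would equally give $\Stab(1^+,1^-)=\bAut(\cA,\inv)$ for $n=1$, $\chr\FF=3$, which is false. Take Corollary~\ref{corollary.AutDim2} with $\alpha=0$: the pair with $\varphi^\pm(1)=1$, $\varphi^+(s)=\beta s$, $\varphi^-(s)=\beta^{-1}s$ fixes $(1^+,1^-)$, but $\varphi^+\neq\varphi^-$ whenever $\beta^2\neq 1$.

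The missing ingredient is the alternative the paper invokes: either the generation holds (Lemma~\ref{lemma.HermitianSkewHermitianCubed}), or $\chr\FF\neq 3$, in which case no generation hypothesis is needed. You can check this directly. Write $J\colon x\mapsto\bar x$ and let $\psi$ fix $1^\pm$.
\begin{itemize}
\item From $\{x,1,1\}=(2\id-J)(x)$, both $\psi^\pm$ commute with $J$.
\item From $\{1,y,1\}^+=(2J-\id)(y)$, where $2J-\id$ has eigenvalues $1$ and $-3$ and is therefore invertible, you get $\psi^+=\psi^-$.
\item Since $\cA$ is commutative for $n=1$, $\{x,\bar y,1\}=(2\id-J)(xy)$, and $2\id-J$ is invertible (eigenvalues $1$ and $3$). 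This gives $\psi(xy)=\psi(x)\psi(y)$.
\end{itemize}

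There are also two smaller slips in the kernel bookkeeping.
\begin{itemize}
\item The map $b\mapsto R_b$ is an anti-homomorphism, so you should use the paper's normalization $\widetilde L_a\widehat R_{\bar b}$.
\item The kernel is not $\{(r1,r^{-1}1)\}$, since $r^{-1}1\in\bIso(\cA,\inv)(\cR)$ only when $r^2=1$. It is $\{((r,r^{-1}),(r,r^{-1}))\mid r\in\cR^\times\}\simeq\bG_m$. The nontrivial element of $\bmu_2$, acting through $(x,y)\mapsto(y^\Tr,x^\Tr)$, sends the kernel element for $r$ to the one for $r^{-1}$. So the kernel is stable, and the action does descend to the central product.
\end{itemize}

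The rest is sound and matches the paper: the normalization of the image, the idempotent argument that the $\bmu_2$-component of a kernel element is trivial, and the $\bGL_2$ case.
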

\begin{proof}
$\bullet$ Case 1: Let either $1 < n \in \NN$, or $n=1$ and $\chr\FF\neq 3$. Consider the morphism:
\begin{equation}\label{eq:thetaUnitary}\theta\colon (\bG_m(\cA)\times \bIso(\cA,\inv))\rtimes \bmu_2\longrightarrow \bAut(\cV)
\end{equation}
given by $\theta_{\cR}(a,b,r)= \widetilde{L}_a \widehat{R}_{\bar{b}}\circ \tau_r$ where \[\tau_r(X,Y)=(Y^t,X^t)\otimes\frac{1}{2}(1-r)+(X,Y)\otimes\frac{1}{2}(1+r).\]
	
	First, we have to prove that $\theta$ is an epimorphism. Take $\varphi\in \bAut(\cV)(\cR)$. Due to Proposition \ref{proposition.invertToInvert}, it follows that $a:=\varphi^+(1^+)$ is invertible. Consider the morphism $\psi=\widetilde{L}_{a^{-1}}\circ \varphi$. Clearly, $\psi\in \Stab_{\Aut_{\cR}(\cV_{\cR})}(1^+)=\Stab_{\Aut_{\cR}(\cV_{\cR})}(1^+,1^-)$ by Lemma~\ref{lemma.Stab1}. Hence by \cite[Proposition~3.1]{AC21} and the fact that either the algebra is generated by its hermitian elements (see Lemma \ref{lemma.HermitianSkewHermitianCubed}) or $\chr\FF\neq 3$, it follows that $\psi^+=\psi^-\in\bAut(\cA,\inv)$.

	Due to \eqref{eq:IsomUnitary} and \eqref{eq:unitaryIota} there is some automorphism $\widetilde{\varphi}$ of $\cM_{n}(\cR)$ and an element $r \in \bmu_2(\cR)$ such that  $\psi=\iota(\widetilde{\varphi})\circ \tau$ with $\tau = \tau_r$. Due to Remark \ref{rem.schemes} and the sheaf property of quotients \cite[\S~15.5]{W79} there is some faithfully flat extension $f\colon \cR \to \cS$ and some element $c\in \GL_n(\cS)$ such that $\bAut(\cM_n(\FF))(f)(\widetilde{\varphi})=\Ad_c$. Concretely, if $b=(c,c^{-1})\in \bIso(\cA,\ex)(\cS)$, it follows that $\iota( \bAut(\cM_n(\FF))(f)(\widetilde{\varphi}))=\Ad_b=L_bR_b^{-1}$ (see \eqref{eq:unitaryIota} for the definition de $\iota$). Thus, $\bAut(\cA,\inv)(f)(\psi^+)=L_bR_b^{-1}\circ \bmu_2(f)(\tau)$.  Denote by $\widetilde{\tau}=(\tau,\tau)$. Since $\psi^+=\psi^-$, it follows that $\bAut(\cV)(f)(\psi)=\widetilde{L}_b\widehat{R}_{b^{-1}}\circ \widetilde{\tau}$. Since $\bAut(\cV)(f)(\widetilde{L}_{a^{-1}})=\widetilde{L}_{\widetilde{a}^{-1}}$ where $\widetilde{a}=\bG_m(\cA(f)(a))$, it follows that $\bAut(\cV)(f)(\varphi)=\bAut(\cV)(f)(\widetilde{L}_{a})\circ \widetilde{L}_b\widehat{R}_{b^{-1}}\circ \widehat{\tau}= \widetilde{L}_{\widetilde{a}b}\widehat{R}_{b^{-1}}\circ \widehat{\tau} =\theta_{\cS}(\widetilde{ab},\overline{b}^1,s)$ for some $s$ such that $\widehat{\tau}=\tau_s$. Therefore, due to the sheaf property of quotient maps \cite[\S~15.5]{W79}, $\theta$ is an epimorphism.
	
	If $(a,b,r)\in \ker(\theta_{\cR})$, then, it is straightforward to show that $r=1$. And in a similar way as in Proposition  \ref{automorphismschemes}, if we write $a=(a_1,a_2)$ and $b=(b_1,b_2)$, from the condition that $\widetilde{L}_a\widetilde{R}_{\overline{b}}=\id$, we get that there are $\lambda,\beta\in \cR^{\times}$ such that $a_1=b_2^{-1}=\lambda$ and $a_2=b_1^{-1}=\beta$ from the fact that $b\in\bIso(\cA,\inv)(\cR)$, it follows that $\lambda=\beta^{-1}$. Therefore, $\mathbf{ker}(\theta)(\cR)=\{\big((r,r^{-1}),(r,r^{-1}),1\big)\mid r\in \cR^{\times}\} \simeq \bG_m(\cR)$.
	
$\bullet$ Case 2: Let $n=1$ and $\chr\FF=3$. By Lemma~\ref{lemma.AutKantor}, the automorphisms of $\cV_{\cR}$ are those of the form $(\varphi^+,\varphi^-)$ where $\varphi^+\in\GL_2(\cR)$ and $\varphi^-$ is the dual inverse of $\varphi^+$ with respect to $n$.
\end{proof}

\begin{corollary}
Let $\chr \FF\neq 3$, $\cA:=\FF\oplus \FF$ and $\inv$ the exchange involution. Let $\cV$ be the Kantor pair associated to $(\cA,\inv)$. Then 
\begin{equation}
\bAut(\cV)\cong \bG_m(\cA)\rtimes \bmu_2.
\end{equation}
\end{corollary}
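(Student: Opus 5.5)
This is the case $n=1$, $\chr\FF\neq 3$ of Theorem~\ref{automorphismschemes.unitar}, which gives
\[\bAut(\cV)\simeq \big(\bG_m(\cA)\otimes_{\bG_m}\bIso(\cA,\inv)\big)\rtimes\bmu_2 .\]
The plan is to show that the central product $\bG_m(\cA)\otimes_{\bG_m}\bIso(\cA,\inv)$ collapses to $\bG_m(\cA)$ when $\cA=\FF\oplus\FF$. Since $\cA$ is commutative, for $a\in\cA_\cR^\times$ and $b\in\bIso(\cA,\inv)(\cR)$ we have
\[\widetilde{L}_a\widehat{R}_{\bar b}=(L_{a\bar b},\,L_{\bar a^{-1}\bar b}).\]
The second component equals $L_{\overline{a\bar b}}^{-1}$ because $b\bar b=1$, so $\widetilde{L}_a\widehat{R}_{\bar b}=\widetilde{L}_{a\bar b}$. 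Hence the image of $\bG_m(\cA)\times\bIso(\cA,\inv)$ under $\theta$ coincides with the image of $\bG_m(\cA)$ under $a\mapsto\widetilde{L}_a$.

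To make this precise at the level of group schemes, I will define the morphism
\[m\colon \bG_m(\cA)\times\bIso(\cA,\inv)\to\bG_m(\cA),\qquad (a,b)\mapsto a\bar b.\]
This is a homomorphism because $\cA_\cR$ is commutative. It is surjective on $\cR$-points, since $(a,1)\mapsto a$. Its kernel is $\{(a,b)\med a=b\}$ with $b\in\bIso(\cA,\inv)(\cR)$, using $b\bar b=1$. This kernel should be compared with the subgroup quotiented out in Theorem~\ref{automorphismschemes.unitar}, which is $\{((r,r^{-1}),(r,r^{-1}))\med r\in\cR^\times\}$.

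An element of $\bIso(\cA,\inv)(\cR)$ has the form $(r,r^{-1})$. So the kernel of $m$ is exactly that copy of $\bG_m$, and $m$ induces an isomorphism
\[\bG_m(\cA)\otimes_{\bG_m}\bIso(\cA,\inv)\simeq\bG_m(\cA).\]
It is compatible with $\theta$, since the composite is $a\mapsto\widetilde{L}_a$.

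Next I check that the $\bmu_2$-action transports correctly. Conjugation by $\tau_r$ acts on $\widetilde{L}_a$ by $a\mapsto \ex(a)$ when $r=-1$ (transposition is trivial for $n=1$), and $\ex$ commutes with $m$. Therefore the semidirect product becomes $\bG_m(\cA)\rtimes\bmu_2$, with $\bmu_2$ acting by the exchange involution.

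The main obstacle is ensuring the isomorphism holds as schemes, not merely on points. Since $m$ is an explicit homomorphism of affine group schemes, surjective on $\cR$-points for every $\cR$, with kernel identified functorially, it is an isomorphism of functors after quotienting. The quotient here is honest because $m$ admits the section $a\mapsto(a,1)$. So no sheafification subtlety arises, and the corollary follows directly from Theorem~\ref{automorphismschemes.unitar}.
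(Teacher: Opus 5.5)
Your proposal is correct and follows essentially the paper's route. The paper defines $\gamma_\cR(a,b,r)=(ab^{-1},r)$ on $(\bG_m(\cA)\times\bIso(\cA,\inv))\rtimes\bmu_2$, which is your $m$ (since $\bar b=b^{-1}$ for isometries) extended by the identity on $\bmu_2$. It then notes that $\gamma$ is surjective on $\cR$-points and that its kernel is the same copy $\{((r,r^{-1}),(r,r^{-1}),1)\}\simeq\bG_m$ as $\ker\theta$. Your explicit checks of the $\bmu_2$-equivariance of $m$, and of why no sheafification issue arises, fill in details the paper leaves implicit.
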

\begin{proof}
Consider the morphism
\[\gamma\colon (\bG_m(\cA)\times \bIso(\cA,\inv))\rtimes \bmu_2\longrightarrow \bG_m(\cA)\rtimes \bmu_2\]
given by $\gamma_{\cR}(a,b,r)=(ab^{-1},r)$ for every associative commutative and unital algebra $\cR$. It is clearly an epimorphism since it is surjective for every $\cR$.
		
Moreover, $\mathbf{ker}(\gamma)(\cR)=\{\big((r,r^{-1}),(r,r^{-1}),1\big)\mid r\in \cR^{\times}\} \simeq \bG_m(\cR)$, which is the same kernel of the morphism $\theta$ defined in \eqref{eq:thetaUnitary}. Thus, we have the isomorphism.
\end{proof}

\begin{corollary}
Let $ n \in \NN$, $\cA := \cM_n(\FF) \oplus \cM_n(\FF)^{op}$ and $\inv$ the exchange involution of $\cA$. Let $\cT$   be the Kantor triple system associated to $(\cA, \inv)$. Then, there are isomorphisms of affine group schemes
\begin{align}
		\bAut(\cT) &\simeq\big( \bIso(\cA,\inv) \otimes_{\bG_m} \bIso(\cA,\inv) \big) \rtimes \bmu_2
		\text{ in case either } 1 < n \in \NN, \text{ or } n=1 \text{ and } \chr\FF\neq 3,\\
		\bAut(\cT)&\simeq \bOrt_2 \text{ in case } n=1 \text{ and } \chr\FF= 3.
	\end{align}
\end{corollary}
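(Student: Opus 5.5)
The plan is to mirror the proof of the $\bAut(\cT)$ part of Theorem~\ref{automorphismschemes}, now using Theorem~\ref{automorphismschemes.unitar} as the input. Note first that for $\cA=\cM_n(\FF)\oplus\cM_n(\FF)^{op}$ with the exchange involution we have $\bIso(\cA,\inv)(\cR)=\{(c,c^{-1})\mid c\in\GL_n(\cR)\}\cong\bGL_n$. The elements $L_a$, $R_{\bar b}$ with $a,b\in\bIso(\cA,\inv)(\cR)$ lie in $\Aut_\cR(\cT_\cR)$ by Notation~2). The maps $\tau_r$ also lie there, since $\tau_r\in\bAut(\cA,\inv)$ (Remark~\ref{rem.schemeUnitaryCase}) and $\bAut(\cA,\inv)\subseteq\bAut(\cT)$. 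I therefore consider the morphism
\[
\Psi\colon (\bIso(\cA,\inv)\times\bIso(\cA,\inv))\rtimes\bmu_2\longrightarrow\bAut(\cT),\qquad \Psi_\cR(a,b,r)=L_aR_{\bar b}\circ\tau_r .
\]
Here $\bmu_2$ acts on the pair as the exchange-plus-transpose automorphism does under conjugation. I will show that $\Psi$ is an epimorphism with kernel isomorphic to $\bG_m$, embedded centrally as $\{((\lambda,\lambda^{-1}),(\lambda,\lambda^{-1}))\}$ up to the correct sign and inverse conventions.

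\textbf{Surjectivity.} Take $\varphi\in\bAut(\cT)(\cR)$ and consider $\iota_\cR(\varphi)=(\varphi,\varphi)\in\bAut(\cV)(\cR)$. Since $\theta$ is an epimorphism, there is a faithfully flat $\cR\to\cS$ over which $\iota(\varphi)=\widetilde L_a\widehat R_{\bar b}\circ\tau_r$, with $a\in\cA_\cS^\times$ and $b\in\bIso(\cA,\inv)(\cS)$. Because $\tau_r$ has equal components, the element $\widetilde L_a\widehat R_{\bar b}$ also has equal components. The argument of Lemma~\ref{l:AutTriples} then gives $\bar a a=1$: $\widehat R_{\bar b}=R_{\bar b}$ on both sides, and $L_a=L_{\bar a}^{-1}$ forces $\bar a a=1$ after evaluating at $1$. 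So $a\in\bIso(\cA,\inv)(\cS)$, and no square root extension is needed. The sheaf property of quotients \cite[\S~15.5]{W79} then shows that $\Psi$ is an epimorphism.

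\textbf{Kernel.} If $L_aR_{\bar b}\tau_r=\id$, evaluating at $1$ and at elements of the centre $\KK$ shows that $\tau_r$ preserves the centre componentwise, so $r=1$. Then $axb'=x$ for all $x$, where $b'$ denotes $\bar b$ viewed inside $\cA$. By the argument in Theorem~\ref{automorphismschemes}, $a$ is central, $a=(\lambda,\beta)$, and $\bar b=a^{-1}$. The isometry condition $\bar aa=1$ gives $\beta=\lambda^{-1}$, so the kernel is $\{(\lambda,\lambda^{-1})\}\cong\bG_m$, which is central in the direct factor. This yields $(\bIso\otimes_{\bG_m}\bIso)\rtimes\bmu_2$. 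I expect the main obstacle to be checking that the semidirect structure passes to the quotient, i.e.\ that the kernel is $\bmu_2$-stable. That holds because $\tau$ maps $(\lambda,\lambda^{-1})$ to $(\lambda^{-1},\lambda)$ and swaps the roles appropriately, and this needs care with the $\bar b$ convention.

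\textbf{Case $n=1$, $\chr\FF=3$.} By Lemma~\ref{lemma.AutKantor}, $\varphi\in\GL_2(\cR)$ is in $\bAut(\cT)$ iff $(\varphi,\varphi)\in\bAut(\cV)$, iff $n(\varphi x,\varphi y)=n(x,y)$. Hence $\bAut(\cT)=\bOrt(\cA,n)\cong\bOrt_2$, since $n$ is a nondegenerate quadratic form on the $2$-dimensional space $\cA$.
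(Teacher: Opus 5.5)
Your proposal is correct and follows essentially the paper's proof. You use the same morphism $(a,b,r)\mapsto L_aR_{\bar b}\circ\tau_r$ and get surjectivity by pulling $(\varphi,\varphi)$ back through the epimorphism $\theta$, where equality of the two components forces $\bar a a=1$; the kernel is $\bG_m$, and the case $n=1$, $\chr\FF=3$ is handled via Lemma~\ref{lemma.AutKantor}. The only difference is that you spell out why $r=1$ in the kernel and why the kernel is $\bmu_2$-stable, which the paper leaves implicit.
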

\begin{proof}
Let $\cV$  be the Kantor triple system associated to $(\cA, \inv)$.

$\bullet$ Case 1: Let $ 1 < n \in \NN$  or  $n=1$  and  $\chr\FF\neq 3$. Consider the morphism:
\[\gamma\colon (\bIso(\cA,\inv)\times \bIso(\cA,\inv))\rtimes \bmu_2\longrightarrow \bAut(\cT)\]
given by $\gamma_{\cR}(a,b,r)= L_a R_{\bar{b}}\circ \tau_r$ where
\[\tau_r(X,Y)=(Y^t,X^t)\otimes\frac{1}{2}(1-r)+(X,Y)\otimes\frac{1}{2}(1+r).\]
	
	In order to prove that it is an epimorphism, we notice that given $\varphi\in\bAut(\cT)(\cR)$, then $(\varphi,\varphi)\in \bAut(\cV)(\cR)$. Therefore, due to the fact that $\theta$ as defined in \eqref{eq:thetaUnitary} is an epimorphism and the sheaf property of quotient maps, there is a faithfully flat extension $\cR\to \cS$ such that if we embed $(\varphi,\varphi)$ into $\bAut(\cT)(\cS)$, we get that  $(\varphi,\varphi)=\widetilde{L}_a\widetilde{R}_{\overline{b}}\circ \tau_r$ for some $a\in\bG_m(\cA)(\cS)$ and $b\in \bIso(\cA,\inv)(\cS)$ and $r\in\bmu_2(\cS)$. Since the positive and the negative part are equal, we get that $L_aR_{\overline{b}}\circ \tau_r=L_{\overline{a}}^{-1}R_{\overline{b}}\circ \tau_r$. Therefore, we have that $L_aR_{\overline{b}} =L_{\overline{a}}^{-1}R_{\overline{b}}$ and if we apply this to $1$, we get $a\overline{b}=\overline{a}^{-1}\overline{b}$. Thus, we get $\overline{a}a=b\overline{b}=1$. Thus $\varphi=L_aR_{\overline{b}}\circ\tau_r$. The sheaf property of quotient maps shows that this is an epimorphism.
	
	The kernel is $\bG_m$ in the same way as in Theorem~\ref{automorphismschemes.unitar}.

$\bullet$ Case 2: Let $n=1$  and  $\chr\FF= 3$. Let $n$ be the polar form of the norm of the Hurwitz algebra $\cA$.
Note that for each $\cR$ and $\varphi \in \GL_2(\cR)$, we have $\varphi\in\bAut(\cT)(\cR)$ if and only if $(\varphi,\varphi)\in\bAut(\cV)(\cR)$. Then, from Lemma~\ref{lemma.AutKantor} it follows that $\varphi\in\bAut(\cT)(\cR)$ if and only if $\varphi$ is its own dual inverse relative to the bilinear form $n$.
\end{proof}


\begin{proposition} \label{orbits} \;
\begin{itemize}
\item[$1)$] Let $n \in \NN$, $\cA := \cM_n(\FF)$ and $\inv$ the transposition or the standard symplectic involution of $\cA$. Let  $\cV$ be the Kantor pair associated to $(\cA, \inv)$. Then, the orbits on $\cV^+$ are exactly the sets
$ \cO_{i} := \{ x \in \cA \med  \rank(x)= i \},$
with $0 \leq i \leq  n$.
\item[$2)$] Let $ n \in \NN$, $\cA := \cM_n(\FF) \oplus \cM_n(\FF)^{op}$ and $\inv$ the exchange involution of $\cA$.  Let $\cV$ be the Kantor pair associated to $(\cA, \inv)$. \\
$\bullet$ If $n > 1$ or $\chr\FF\neq3$, then the orbits on $\cV^+$ are exactly the sets
$$ \cO_{i,j} := \{ (x,y) \in \cA \med \{ \rank(x), \rank(y) \} = \{ i, j \} \},$$
with $0 \leq i \leq j \leq n$. \\
$\bullet$ If $n = 1$ and $\chr\FF=3$, then the orbits on $\cV^+$ are exactly the sets
$$ \cO_0 := \{0\}, \quad \cO_1 := \cA \setminus \{0\}.$$
\end{itemize}
\end{proposition}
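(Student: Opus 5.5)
We will work with the $\FF$-points of the automorphism group schemes, which by Theorems~\ref{automorphismschemes} and \ref{automorphismschemes.unitar} are explicitly described. In every case the proof has two halves. First we show that each listed set is a union of orbits, i.e.\ that the rank data are invariants. Then we show that two elements with the same rank data lie in the same orbit, by exhibiting automorphisms that move each element to a normal form.

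\emph{Part 1).} By Theorem~\ref{automorphismschemes} (taking $\cR=\FF$, where the epimorphism $\Phi$ is surjective on points since $\FF$ is algebraically closed), every automorphism of $\cV$ has the form $\widetilde{L}_a\widehat{R}_b$ with $a\in\GL_n(\FF)$ and $b\in\Sim(\cA,\inv)$. Its positive component is $x\mapsto a x \bar b$ up to a scalar, with $a,\bar b$ invertible, so rank is preserved. Conversely, let $x$ have rank $i$. Choose invertible $P,Q$ with $PxQ=E_i:=\diag(\mathrm I_i,0)$. Write $x' := xQ$. It has rank $i$, and it is taken to $E_i$ by left multiplication by $P$, which is $\widetilde{L}_P^+$. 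So we only need to move $x$ to $x'=xQ$ using a left multiplication, avoiding the right action (which is restricted to similitudes). Since $x$ and $xQ$ have the same row space, there is an invertible $P'$ with $P'x = xQ$; this is a standard linear-algebra fact, because both are $n\times n$ matrices of rank $i$ with the same kernel? No: $xQ$ has a different kernel but the same column space. So instead we use that any two matrices of the same rank are related by left multiplication iff they have equal kernels. Hence the cleaner route is the following. The left action $\widetilde L_a^+$ alone acts on rank-$i$ matrices with orbits indexed by the kernel, an $(n-i)$-dimensional subspace. The right action $x\mapsto x\bar b$ with $b$ a similitude acts on kernels through $\bar b^{-1}$, and $\Sim(\cA,\inv)$ ($\GO_n$ or $\GSp_n$ acting on $\FF^n$) is transitive on \emph{some} subspaces but not on all $(n-i)$-dimensional ones (e.g.\ isotropic versus non-isotropic). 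So the key step is to reconsider: we may also let $\bar b$ act on the left side of the kernel condition. Actually the kernel of $a x \bar b$ is $\bar b^{-1}\ker x$, so transitivity requires $\Sim$ to be transitive on $(n-i)$-subspaces, which fails. We therefore exploit the \emph{column} space too: use the involution symmetry. Since $x\mapsto \bar x$ is not an automorphism, the main obstacle is exactly this point. What we would try first is to verify, using $\ker x$ versus $\operatorname{im}x$ and the freedom of $a\in \GL_n$ on the image, that the pair (kernel) only matters up to $\Sim$, and to check whether Witt's theorem plus the scalar $\widetilde L$ suffices. If it does not, we would recheck whether additional automorphisms (e.g.\ $\widetilde L_a$ composed with $\widehat R_b$ where $b$ is merely invertible in the symplectic $n=2$ case) are available.

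\emph{Part 2).} For $n>1$ or $\chr\FF\neq3$, Theorem~\ref{automorphismschemes.unitar} gives automorphisms $\widetilde L_a\widehat R_{\bar b}\tau_r$ with $a=(a_1,a_2)$ an arbitrary pair of invertible matrices and $b\in\Iso(\cA,\ex)$, i.e.\ $b=(c,c^{-1})$. The positive component sends $(x,y)$ to a pair of the form $(a_1 x c',\, y'' )$, with both coordinates multiplied on each side by invertible matrices. Here $c$ is arbitrary, so the right action is by all of $\GL_n$ on each coordinate (compatibly). Hence each coordinate can be independently brought to $\diag(\mathrm I_{\rank},0)$, since left factors $a_1,a_2$ are independent. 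The swap $\tau_{-1}$ exchanges the coordinates (with transposes), which gives the unordered set $\{i,j\}$. Invariance is clear, because ranks are preserved and the swap only permutes them. For $n=1$ and $\chr\FF=3$, $\bAut(\cV)(\FF)=\GL_2(\FF)$ acts on $\cV^+=\FF^2$ transitively on nonzero vectors, which gives the two orbits $\cO_0$ and $\cO_1$.
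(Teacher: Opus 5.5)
Your proposal does not prove the statement: Part 1 stops at an unresolved obstacle, and Part 2 relies on a false step. However, the obstacle you hit in Part 1 is genuine. It is a counterexample to the statement, not a weakness of your method.

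You begin as the paper does: surjectivity of $\Phi_\FF$ writes every automorphism as $\widetilde{L}_a\widehat{R}_b$, after which the paper says the result ``follows trivially''. On $\cV^+$ this acts by $x\mapsto axb$ with $a\in\GL_n(\FF)$ and $b\in\Sim(\cA,\inv)$. Hence $x\bar x\mapsto \mu(b)\,a(x\bar x)\bar a$, so $\rank(x\bar x)$ is an orbit invariant in addition to $\rank x$.

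For the transposition with $n\geq 2$, the matrices $E_{11}$ and $E_{11}+\bi E_{12}$ both have rank $1$, but $xx^t$ equals $E_{11}$ and $0$ respectively. This does not even depend on Theorem~\ref{automorphismschemes}. From $\varphi^+U_x=U_{\varphi^+(x)}\varphi^-$, the rank of $U_x(y)=2xy^tx-xx^ty$ is invariant, and it equals $n$ for $x=E_{11}$ and $1$ for $x=E_{11}+\bi E_{12}$. For the symplectic involution with $n=2m\geq 4$, the matrices $E_{11}+E_{22}$ and $E_{11}+E_{2,m+1}$ both have rank $2$, while $\rank(x\bar x)$ equals $0$ and $2$ respectively.

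By Witt's theorem, the orbits are indexed by $(\rank x,\rank x\bar x)$, i.e.\ by the dimension of the row space and of its radical. This is precisely the isotropic versus non-isotropic phenomenon you observed. Part 1) holds only for the transposition with $n=1$ and for the symplectic case with $n=2$, where $\Sim(\cA,\inv)=\GL_2(\FF)$. Theorem~\ref{automorphismschemes} shows there are no additional automorphisms to fall back on.

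In Part 2, the assertion that the two coordinates can be normalized independently is wrong. Take $b=(c,c^{-1})$ and use the product of $\cM_n(\FF)\oplus\cM_n(\FF)^{op}$. The positive component of $\widetilde{L}_a\widehat{R}_{\bar b}$ is $(x_1,x_2)\mapsto(a_1x_1c^{-1},\,cx_2a_2)$. The inner factor $c$ is shared, so $x_1x_2\mapsto a_1(x_1x_2)a_2$, and $\tau_{-1}$ replaces $x_1x_2$ by $(x_1x_2)^t$. Hence $\rank(x_1x_2)$ is invariant; this is again the invariant $x\bar x=(x_1x_2,x_1x_2)$.

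For $n=2$, the elements $(E_{11},E_{11})$ and $(E_{11},E_{22})$ both lie in $\cO_{1,1}$ but in different orbits. Intrinsically, $\rank U_x$ is at least $3$ for the first and exactly $2$ for the second. The orbits are those of the $A_3$ quiver $\FF^n\to\FF^n\to\FF^n$ up to the swap. They are classified by $\{\rank x_1,\rank x_2\}$ together with $\rank(x_1x_2)$. So the first bullet of 2) holds only for $n=1$. Your treatment of $n=1$, $\chr\FF=3$ via the transitive action of $\GL_2(\FF)$ on nonzero vectors is correct.

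No completion of your argument can prove the proposition as written. The paper's ``follows trivially'' overlooks the same invariant $\rank(x\bar x)$, so the statement itself needs to be corrected.
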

\begin{proof}
1) Since the morphism $\Phi$ from \eqref{eq:IsoAutV} is an epimorphism, it follows that $\Phi_{\FF}$ is surjective \cite[Theorem A.48]{EKmon}, thus, every automorphism of $\cV$ is of the form $\widetilde{L}_a\widehat{R}_b$ with $a\in \cA^{\times}$ and $b\in \Sim(\cA,\inv)$. From here the result follows trivially.

2) If $n=1$ and $\chr\FF=3$ it is clear since every endomorphism of $\FF\oplus\FF$ is the positive part of an automorphism of $\cV$. Otherwise, as in 1), since the morphism $\theta$ from \eqref{eq:thetaUnitary} is an epimorphism, the automorphisms of $\cV$ are of the form $\widetilde{L}_a\widehat{R}_b\circ \tau$ or $\widetilde{L}_a\widehat{R}_b$,
with $a\in \cA^{\times}$, $b\in \Iso(\cA,\inv)$ and $\tau(X,Y)=(Y^t,X^t)$. Thus, the result follows trivially.
\end{proof}


\begin{proposition}
Let $(\cA, \inv)$ be a finite-dimensional associative structurable algebra, and $\frg = \kan(\cA, \inv) = \bigoplus_{i=-2}^2 \frg_i$ the $\ZZ$-graded ($5$-graded) Lie algebra given by the Kantor construction. Let $\cV$ be the Jordan pair associated to the $\ZZ$-graded Lie algebra $\cL = \bigoplus_{i=-1}^1 \cL_i \leq \frg$, where $\cL_{1} := \frg_2$, $\cL_{-1} := \frg_{-2}$, $\cL_0 := \lspan\{ [\frg_{-2}, \frg_2] \} \leq \frg_0$. For the below cases 1) and 3), we also assume that there exists a square root $\mathbf{i} \in \FF$ of $-1$. Then:
\begin{itemize}
\item[$1)$]
If $\cA = \cM_n(\FF)$ and the involution is the transposition $\bar x := x^\Tr$, then we have
$\cV \cong \VII_n$, where $\VII_n$ denotes a simple Jordan pair of type $\textnormal{II}_n$.
\item[$2)$] 
If $\cA = \cM_n(\FF)$ with $n = 2m$ and the involution is the standard symplectic involution $\bar x := x^\s$, then we have $\cV \cong \VIII_n$, where $\VIII_n$ denotes a simple Jordan pair of type $\textnormal{III}_n$.
\item[$3)$]
If $\cA = \cM_n(\FF)\times \cM_n(\FF)^{op}$ and the involution is given by  $\overline{(x,y)}= \ex(x,y) := (y,x)$, then we have $\cV \cong \VI_n$, where $\VI_n$ denotes a simple Jordan pair of type $\textnormal{I}_n$.
\end{itemize}
\end{proposition}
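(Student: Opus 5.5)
In the Kantor construction $\frg = \kan(\cA,\inv)$ the extreme spaces are $\frg_{\pm 2} \cong \cS := \cS(\cA,\inv)$, the space of skew elements. The plan is to compute the Jordan pair $(\frg_2,\frg_{-2})$ explicitly on $(\cS,\cS)$ and then recognize it in each case.

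\textbf{Step 1: the triple product on $\cS$.} Recall the standard description of the Kantor Lie algebra:
\begin{itemize}
\item $\frg_{2} = \{K_{a,b}\}$ with $K_{a,b}(c) = a\bar c b - b\bar c a$; for associative $\cA$ the map $(a,b)\mapsto K_{a,b}$ identifies $\frg_2$ with $\cS$ via $s \leftrightarrow K_{s,1}$, and likewise $\frg_{-2}\cong \cS$.
\item For $s\in\frg_2$ and $t\in\frg_{-2}$, the bracket $[s,t]\in\frg_0$ is the operator $V_{s\bar t\,\cdot}$-type element computed in \cite{AliModels}.
\end{itemize}
Applying this bracket again to $\frg_{\pm 2}$, I expect the Jordan pair product to be, up to a nonzero scalar factor (in fact $\pm 2$),
\begin{equation*}
\{s,t,u\} = s t u + u t s, \qquad s,u\in\cS^\sigma,\ t\in\cS^{-\sigma}.
\end{equation*}
So $\cV$ is the Jordan pair $(\cS,\cS)$ with product $\{s,t,u\} = st u + u t s$, equivalently $Q_s t = s t s$. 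I will verify this by a direct calculation from the defining formulas of $\kan(\cA,\inv)$. A rescaling of one component absorbs the constant, and this is where the hypothesis $\mathbf{i}\in\FF$ may be needed: it makes signs such as $\bar t = -t$ harmless via $t\mapsto \mathbf{i}t$.

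\textbf{Step 2: identification in each case.}
\begin{itemize}
\item \emph{Case 1} (transposition). Here $\cS$ is the space of skew-symmetric $n\times n$ matrices with $Q_s t = sts$. This is precisely the standard model of the type $\textnormal{II}_n$ Jordan pair of alternating matrices, whose quadratic map is $Q_x y = x y x$ (possibly with a sign, fixed by $t\mapsto -t$ or $t\mapsto\mathbf{i}t$).
\item \emph{Case 2} (symplectic). The map $s\mapsto s\Omega$ (or $\Omega^{-1}s$) sends $\cS(\cA,\s)$ onto the symmetric matrices, since $s^\s = -s$ translates to $(s\Omega)^\Tr = s\Omega$ up to sign. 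Using $s\mapsto s\Omega$ on $\cV^+$ and $t\mapsto \Omega^{-1}t$ on $\cV^-$, the product $sts$ becomes $xyx$ on symmetric matrices. This is type $\textnormal{III}_n$; the sign is corrected as needed, and it uses only $\Omega^2=-1$ and no square root.
\item \emph{Case 3} (exchange). Here $\cS = \{(x,-x)\}\cong \cM_n(\FF)$. In $\cM_n\oplus\cM_n^{op}$ the product is $(x,-x)(y,-y)(z,-z) = (xyz,\,-zyx)$, and the first component gives $Q_x y = \pm x y x$ on $\cM_n(\FF)$. After a scalar rescaling with $\mathbf{i}$, this is type $\textnormal{I}_{n}$, i.e.\ $\textnormal{I}_{n,n}$.
\end{itemize}

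\textbf{Step 3: verifying the isomorphisms.} Each map in Step 2 is checked to be a Jordan pair isomorphism by confirming that it intertwines the $Q$ operators. This is routine once the formula of Step 1 is established.

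The main obstacle is Step 1: pinning down the exact bracket $[\frg_{-2},\frg_2]$ acting on $\frg_{\pm2}$ in Allison's model, including the constants, and confirming that $\cL_0$ plays no role beyond generating the product. I would first do a quick check of the constant and sign on $n=2$ examples before fixing the rescalings in Step 2.
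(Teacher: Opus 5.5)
Your strategy is the same as the paper's. You identify $\frg_{\pm2}$ with $\cS=\cS(\cA,\inv)$, show that the induced Jordan pair product is $\{x,y,z\}=xyz+zyx$, and then match this with the standard models.

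The gap is Step~1. It is the only place where the Kantor construction enters, so it is the actual content of the proposition, and you only state the expected formula. Moreover, the description of $\frg_2$ you intend to compute with is wrong. By definition $K_{a,b}(c)=\{a,c,b\}-\{b,c,a\}$. Expanding $V_{a,c}b-V_{b,c}a$, the terms $(a\bar c)b$ and $(b\bar c)a$ cancel, leaving $K_{a,b}(c)=(a\bar b-b\bar a)c$. So $K_{a,b}=L_{a\bar b-b\bar a}$ and $K_{s,1}=2L_s$. Your formula $c\mapsto a\bar cb-b\bar ca$ would instead give $K_{s,1}\colon c\mapsto s\bar c-\bar c s$. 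That operator kills $1$, whereas the true $K_{s,1}$ sends $1$ to $2s$, so it is not the operator occurring in $\kan(\cA,\inv)$.

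The missing observation is that $\frg_{2}$ and $\frg_{-2}$ consist of left multiplications $L_s$, $s\in\cS$, placed in off-diagonal corners:
$\iota^+(L_s)=\begin{pmatrix}0&0\\L_s&0\end{pmatrix}$ and $\iota^-(L_s)=\begin{pmatrix}0&L_s\\0&0\end{pmatrix}$.
Then $[\iota^+(L_x),\iota^-(L_y)]=\diag(-L_yL_x,\,L_xL_y)$. Multiplying $2\times2$ operator matrices and using associativity,
$[[\iota^\sigma(L_x),\iota^{-\sigma}(L_y)],\iota^\sigma(L_z)]=\iota^\sigma(L_xL_yL_z+L_zL_yL_x)=\iota^\sigma(L_{xyz+zyx})$.
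There is no stray constant, and $\cL_0$ plays no further role. This takes two lines, but it must be done in the correct model; as written, your proposal is a plan rather than a proof.

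Granting Step~1, your Step~2 is correct and in places simpler than the paper's.

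\emph{Case 2.} The maps $s\mapsto s\Omega$ and $t\mapsto\Omega^{-1}t$ land in symmetric matrices; this uses $\Omega^{\Tr}=\Omega^{-1}=-\Omega$, not just $\Omega^2=-1$. They intertwine the products in one line: $s\Omega\cdot\Omega^{-1}t\cdot u\Omega=stu\,\Omega$. The paper's block map is your map followed by the automorphism $x\mapsto -DxD$ of $\VIII_n$, with $D=\diag(\mathrm{I}_m,-\mathrm{I}_m)$. The paper verifies it by an explicit block computation.

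\emph{Cases 1 and 3.} Rescaling a single component, as you suggest, suffices. The map $(\id,-\id)$ is an isomorphism from a pair with product $P$ onto the same spaces with product $-P$. This handles $\VII_n$, whose product in the paper's convention is $-(xyz+zyx)$. In case~3 the identification $(x,-x)\mapsto x$ already turns the product into $xyz+zyx$, so the identity map works. The paper instead scales both components by $\mathbf{i}$, which multiplies the product by $\mathbf{i}^2=-1$. That is where its hypothesis on $\mathbf{i}$ is used, and your route shows the hypothesis is unnecessary.
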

\begin{proof}
			Let $\cS = \cS(\cM_n(\FF), \inv)$. From the Kantor construction we know that
			$\cV^\sigma = \{\iota^\sigma(L_x) \med x\in \cS\}$ for $\sigma = \pm$, where
			$\iota^\sigma \colon \lspan\{ L_x \med x\in \cS \}
			\subseteq \End(\cA) \longrightarrow \frg_{\sigma 2} \subseteq \kan(\cA, \inv)$
			are the inclusions given by
			\begin{equation*}
				\iota^+(f) := \begin{pmatrix} 0 & 0 \\ f & 0 \end{pmatrix}, \quad
				\iota^-(f) := \begin{pmatrix} 0 & f \\ 0 & 0 \end{pmatrix}.
			\end{equation*}
			The triple products of $\cV$ are given by
			\begin{equation*}
				\{ \iota^\sigma(L_x), \iota^{-\sigma}(L_y), \iota^\sigma(L_z) \}
				= \big[ [ \iota^\sigma(L_x), \iota^{-\sigma}(L_y) ], \iota^\sigma(L_z) \big]
				= \iota^\sigma(L_xL_yL_z + L_zL_yL_x) = \iota^\sigma(L_{xyz + zyx}),
			\end{equation*}
			which are also determined by the quadratic products
			$Q_{\iota^\sigma(L_x)}\big( \iota^{-\sigma}(L_y) \big) = \iota^\sigma(L_{xyx})$, where $Q_x(y) := \frac{1}{2} \lbrace x,y,x \rbrace$.
			If we identify $\iota^\sigma(L_x) \equiv x$ and $\cV^\sigma \equiv \cS$, then we can write $Q_x(y) = xyx$. We can now particularize on each case.
			
			$1)$ Note that $\cV^\sigma = \cS = A_n(\FF) := \cS(\cM_n(\FF), \Tr)$, and recall from \cite[Chapter~4]{L75} that $\VII_n := (A_n(\FF), A_n(\FF))$ has triple products $\{x,y,z\} := x y^\Tr z + z y^\Tr x = -(xyz+zyx)$. Then it is clear that we have an isomorphism $\VII_n \longrightarrow \cV$ given by
			\begin{equation}
				(\VII_n)^\sigma \longrightarrow \cV^\sigma, \qquad x \longmapsto \mathbf{i} x.
			\end{equation}
			
			$2)$ Recall that $\VIII_n := (H_n(\FF), H_n(\FF))$, with $H_n(\FF) := \cH(\cM_n(\FF), \Tr)$, has triple products $\{x,y,z\} := x y^\Tr z + z y^\Tr x = xyz+zyx$, and the associated quadratic products are $Q_x(y) = xyx$. In this case,
			$$\cV^\sigma = \cS = \left\{ \begin{pmatrix} A & B \\ C & -A^\Tr \end{pmatrix}
			\med A \in \cM_m(\FF), \; B,C \in H_m(\FF) \right\}.$$
			For $\sigma = \pm$, consider the linear isomorphisms
			\begin{align}
				\varphi^\sigma \colon \cV^\sigma = \cS \longrightarrow (\VIII_n)^\sigma = H_n(\FF), \quad
				\varphi^+ \begin{pmatrix} A & B \\ C & -A^\Tr \end{pmatrix}
				:= \begin{pmatrix} B & A \\ A^\Tr & -C \end{pmatrix}, \quad
				\varphi^- \begin{pmatrix} A & B \\ C & -A^\Tr \end{pmatrix}
				:= \begin{pmatrix} C & A^\Tr \\ A & -B \end{pmatrix}.
			\end{align}
			Fix $x = \begin{pmatrix} A & B \\ C & -A^\Tr \end{pmatrix}$,
			$y = \begin{pmatrix} \alpha & \beta \\ \gamma & -\alpha^\Tr \end{pmatrix}$ in $\cS$. Then
			$\varphi^+(x) = \begin{pmatrix} B & A \\ A^\Tr & -C \end{pmatrix}$,
			$\varphi^-(y) = \begin{pmatrix} \gamma & \alpha^\Tr \\ \alpha & -\beta \end{pmatrix}$,
			and
			\begin{align*}
				\varphi^+ \big( Q_x(y) \big) &= \varphi^+ \begin{pmatrix} 
					A \alpha A + B \gamma A + A \beta C - B \alpha^\Tr C &
					A \alpha B + B \gamma B - A \beta A^\Tr + B \alpha^\Tr A^\Tr \\
					C \alpha A - A^\Tr \gamma A + C \beta C + A^\Tr \alpha^\Tr C &
					C \alpha B - A^\Tr \gamma B - C \beta A^\Tr - A^\Tr \alpha^\Tr A^\Tr \end{pmatrix} \\
				&= \begin{pmatrix}
					A \alpha B + B \gamma B - A \beta A^\Tr + B \alpha^\Tr A^\Tr &
					A \alpha A + B \gamma A + A \beta C - B \alpha^\Tr C \\
					A^\Tr \alpha^\Tr A^\Tr + A^\Tr \gamma B + C \beta A^\Tr - C \alpha B &
					- C \alpha A + A^\Tr \gamma A - C \beta C - A^\Tr \alpha^\Tr C \end{pmatrix} \\
				&= \varphi^+(x) \varphi^-(y) \varphi^+(x) = Q_{\varphi^+(x)} \big( \varphi^-(y) \big).
			\end{align*}
			Since $\varphi^- = \varphi^+ \circ \Tr$, we also get
			\begin{align*}
				\varphi^-\big( Q_y(x) \big) = \varphi^+ \big( (yxy)^\Tr \big)
				= \varphi^+( y^\Tr x^\Tr y^\Tr ) = \varphi^+ \big( Q_{y^\Tr}(x^\Tr) \big)
				= Q_{\varphi^+(y^\Tr)} \big( \varphi^-(x^\Tr) \big)
				= Q_{\varphi^-(y)} \big( \varphi^+(x) \big),
			\end{align*}
			and therefore $\varphi$ is an isomorphism of Jordan pairs.
	
            $3)$ Note that $\cV^\sigma = \cS := \cS(\cM_n(\FF)\times \cM_n(\FF)^{op}, \ex)= \lbrace (x,-x) \mid x\in \cM_n(\FF) \rbrace \cong \cM_n(\FF)$.

             and recall  that $\VI_n := (\cM_n(\FF), \cM_n(\FF))$ has triple products $\{x,y,z\}^{\sigma} := x y z + z y x$. Then it is clear that we have an isomorphism $\VI_n \longrightarrow \cV$ given by
			\begin{equation}
				(\VI_n)^\sigma \longrightarrow \cV^\sigma, \qquad x \longmapsto (\sigma\mathbf{i}x, -\sigma\mathbf{i} x).
			\end{equation}	
			
			
\end{proof}

\section{Some tensor product decompositions}\label{s:section2}

In this section we will describe some tensor product decompositions for some of the Kantor pairs studied in this work, regarded as metric generalized Jordan (super)pairs, and we also give some examples for other metric generalized Jordan (super)pairs. These decompositions are related to the corresponding decompositions of the automorphism group schemes as central products of smaller group schemes for the Kantor pairs of the orthogonal and symplectic cases. Due to the lack of enough theory (for tensor products of metric generalized Jordan superpairs), we still cannot use these tensor product decompositions to deduce the automorphism group schemes, which would simplify the proofs for the orthogonal and symplectic cases.

For the classification and definitions of the simple Jordan pairs (and some of their invariants, like their generic trace), the reader may consult \cite[Chapter~IV]{L75}. For the construction and automorphisms of the simple Jordan antipairs, some important references are \cite{FF80} and \cite{B08}.

\begin{df}
For homogeneous elements $x,y,z$ in a $\ZZ_2$-graded vector space, we will denote
\begin{align}
\eta_{x,y} &:= (-1)^{\varepsilon(x)\varepsilon(y)}, \\
\eta_{x,y,z} &:= (-1)^{\varepsilon(x)\varepsilon(y)
+ \varepsilon(y)\varepsilon(z) + \varepsilon(z)\varepsilon(x)},
\end{align}
where $\varepsilon$ is the degree map (called \emph{parity}).
A \emph{(trilinear) pair} is a pair of vector spaces $\cV = (\cV^+, \cV^-)$ with a pair of trilinear maps $\{\cdot,\cdot,\cdot\}^\sigma \colon \cV^\sigma \times \cV^{-\sigma} \times \cV^\sigma \to \cV^\sigma$, $\sigma \in \{+, -\}$. The $D$-operators are defined by $D^\sigma_{x,y}(z) := \{x,y,z\}^\sigma$. A \emph{(trilinear) superpair} is a trilinear pair where $\cV^-$ and $\cV^+$ are $\ZZ_2$-graded. For homogeneous $x,y$, the maps $D^\sigma_{x,y}$ are homogeneous of parity $\varepsilon(D^\sigma_{x,y}) := \varepsilon(x) + \varepsilon(y)$. A \emph{generalized Jordan superpair} is a trilinear superpair $\cV$ where we have that
\begin{equation} \label{defGJSP}
[D^\sigma_{x,y}, D^\sigma_{z,w}] = 
D^\sigma_{D^\sigma_{x,y}z, w} - \eta_{x,y,z} D^\sigma_{z, D^{-\sigma}_{y,x}w},
\end{equation}
for any homogeneous elements $x,z\in \cV^\sigma$, $y,w\in \cV^{-\sigma}$, $\sigma = \pm$, where the left side of \eqref{defGJSP} denotes the Lie superbracket. Even superpairs ($\cV^\sigma_{\bar1} = 0$ for $\sigma = \pm$) are called \emph{pairs} and odd superpairs ($\cV^\sigma_{\bar0} = 0$ for $\sigma = \pm$) are called \emph{antipairs}.

Let $\cV$ be a generalized Jordan superpair with a bilinear form $\langle\cdot,\cdot\rangle \colon \cV^- \times \cV^+ \to \FF$. We say that $\langle\cdot,\cdot\rangle$ is \emph{superinvariant} if for any homogeneous $x,z\in\cV^-$, $y,w\in\cV^+$ we have
\begin{equation} \label{superinvariant}
\langle D_{x,y}z, w \rangle = \eta_{x,y,z} \langle z, D_{y,x}w \rangle.
\end{equation}
We say that $\langle\cdot,\cdot\rangle$ is \emph{even} if $\langle x, y \rangle = 0$ for any homogeneous $x\in\cV^-$, $y\in\cV^+$ such that $\varepsilon(x) \neq \varepsilon(y)$. 
We say that $\langle\cdot,\cdot\rangle$ is \emph{supersymmetric} if for any homogeneous $x,z\in\cV^-$, $y,w\in\cV^+$ we have
\begin{equation}\label{symmetricForm}
\begin{split}
\langle D_{x,y}z, w \rangle &= \eta_{D_{x,y}, D_{z,w}} \langle D_{z,w}x, y \rangle, \\
\langle x, D_{y,z}w \rangle &= \eta_{D_{x,y}, D_{z,w}} \langle z, D_{w,x}y \rangle.
\end{split}
\end{equation}
We say that $(\cV, \langle\cdot,\cdot\rangle)$ is a \emph{metric generalized Jordan superpair} if $\cV$ is a generalized Jordan superpair and $\langle\cdot,\cdot\rangle \colon \cV^- \times \cV^+ \to \FF$ is a nondegenerate even superinvariant supersymmetric bilinear form; in that case, $\langle\cdot,\cdot\rangle$ is called the \emph{metric} of $\cV$. We may denote $\langle\cdot,\cdot\rangle^- := \langle\cdot,\cdot\rangle$ and, by the rule of signs, we define the map $\langle\cdot,\cdot\rangle^+ \colon \cV^+ \times \cV^- \to \FF$, $\langle x,y \rangle^+ := \eta_{x,y} \langle y,x \rangle^-$.
\end{df}

\begin{notation}
Let $\cV$ be a generalized Jordan superpair with a metric $\langle\cdot,\cdot\rangle^\sigma \colon \cV^\sigma \times \cV^{-\sigma} \longrightarrow \FF$. For $a \in \ZZ_2$, denote $\eta_a := (-1)^a$. Given $\alpha = (\lambda,a) \in \FF \times \ZZ_2$, the \emph{tensor-shift} by $\alpha$ of $(\cV, \langle\cdot,\cdot\rangle)$ (see \cite[Notation~4.6]{A22}), is the metric generalized Jordan superpair $\cV^{[\alpha]} := \cV$ with shifted degree $\varepsilon_{[\alpha]}(x) := \varepsilon(x)+a$, metric
\begin{equation}
\begin{split}
& \langle x^+,y^- \rangle_{[\alpha]}^+ := \eta_{a,x} \langle x^+,y^- \rangle^+ = \eta_{a,y} \langle x^+,y^- \rangle^+, \\
& \langle x^-,y^+ \rangle_{[\alpha]}^- := \eta_a\eta_{a,x} \langle x^-,y^+ \rangle^- = \eta_a\eta_{a,y} \langle x^-,y^+ \rangle^-.
\end{split}
\end{equation}
and triple products
\begin{equation}
\begin{split}
\{x,y,z\}_{[\alpha]}^+ &:= \eta_{a,y}(\{x,y,z\}^+ + \lambda \langle x,y \rangle^+ z), \\
\{x,y,z\}_{[\alpha]}^- &:= \eta_a\eta_{a,y}(\{x,y,z\}^- + \lambda \langle x,y \rangle^- z). \\
\end{split}
\end{equation}
(Notice that the parities of the elements in the right side of the equalities above correspond to the ones in the original pair before doing the tensor-shift.)
\end{notation}

\begin{notation}
Recall from \cite[Proposition~4.3]{AC26} (or \cite[Proposition~4.3]{A22}) that the \emph{tensor (super)product} of metric generalized Jordan (super)pairs $\cV_i$ for $i=1,...,n$ is given by $\cW := \bigotimes_{i=1}^n \cV_i$ with bilinear form $\langle \cdot, \cdot \rangle$  given by
\begin{align}
\langle \otimes_i f_i, \otimes_i v_i \rangle
&= \prod_{i=1}^n \Bigl( \prod_{k < i} \eta_{f_i, v_k} \Bigr)
\langle f_i, v_i \rangle
= \Bigl( \prod_{1 \leq j < i \leq n} \eta_{f_i, v_j} \Bigr)
\Bigl( \prod_{i=1}^n \langle f_i, v_i \rangle \Bigr),
\end{align}
and triple products $\cW^\sigma \times \cW^{-\sigma} \times \cW^\sigma \to \cW^\sigma$,
\begin{equation}\begin{aligned}
\{\otimes_i x_i, \otimes_i y_i, \otimes_i z_i \}
&= \Bigl( \prod_{1 \leq j < i \leq n} \eta_{x_i, y_j} \Bigr)
\sum_{i=1}^n \Bigl( \prod_{t < i} \eta_{x_i, z_t} \eta_{y_i, z_t} \Bigr) \cdot \\
& \qquad\cdot \langle x_1, y_1 \rangle z_1 \otimes\dots\otimes \{x_i, y_i, z_i\}
\otimes\dots\otimes \langle x_n, y_n \rangle z_n.
\end{aligned}\end{equation}
Also, recall that tensor-shifts are just the tensor products by $1$-dimensional (super)pairs.
\end{notation}

\begin{examples} \label{Examples1} \;
\begin{itemize}
\item[1)]
Let $\cV = (\cV^+, \cV^-) = \cV^{\text{(I)}}_{m,n} := (\cM_{m,n}(\FF), \cM_{n,m}(\FF))$ denote the simple Jordan pair of type I with parameters $m,n$. Its triple products (using the construction from \cite{S84}) are
\begin{equation}
\{ x,y,z \}^\sigma = xyz + zyx.
\end{equation}
Recall that the generic trace of $\cV$ (which is a metric by \cite[Example~4.7]{A22}) is
\begin{equation}
t^\sigma \colon \cV^\sigma \times \cV^{-\sigma} \longrightarrow \FF,
\qquad  t^\sigma(x,y) := \tr(xy) = \tr(yx).
\end{equation}
In particular,
\begin{equation} \label{ctes.base.Eij.par.I}
\{ E_{ri}, E_{sj}, E_{tk} \}^\sigma = \delta_{is} \delta_{jt} E_{rk} + \delta_{ks} \delta_{jr} E_{ti},
\qquad  t^\sigma(E_{ri},E_{sj}) = \delta_{is}\delta_{rj}.
\end{equation}
\\ $\bullet$ More in particular, for the pair $\cV = \cV^{\text{(I)}}_{1,n} \equiv (\FF^n, \FF^n)$ we can identify both bases $\{ E_{1i} \}$ of $\cV^+$ and $\{ E_{i1} \}$ of $\cV^-$ with the canonical basis $\{ e_i \}$ of $\FF^n$, and then the triple products and generic trace can be written as
\begin{equation} \label{ctes.base.ei.par.I}
\{ e_i, e_j, e_k \}^\sigma = \delta_{ij} e_k + \delta_{kj} e_i,
\qquad  t^\sigma(e_i^\sigma, e_j^{-\sigma}) = \delta_{ij}.
\end{equation}

\item[2)]
Let $\cA = (\cA^+, \cA^-) = \cA^{\text{(I)}}_{m,n} := (\cM_{m,n}(\FF), \cM_{n,m}(\FF))$ be the simple Jordan antipair of type I and parameters $m,n$ (e.g., see \cite[Example~1.2.3]{B08}). Its triple products are
\begin{equation}
\{ x,y,z \}^\sigma = xyz - zyx.
\end{equation}
Note that a metric $\widetilde{t}$ of $\cA$ is given by 
\begin{align}
\widetilde{t}^\sigma \colon \cA^\sigma \times \cA^{-\sigma} \longrightarrow \FF,
\qquad \widetilde{t}^\sigma(x,y) := -\sigma t^\sigma(x,y) = -\sigma \tr(xy) = -\sigma \tr(yx).
\end{align}
In particular,
\begin{equation} \label{ctes.base.Eij.antipar.I}
\{ E_{ri}, E_{sj}, E_{tk} \}^\sigma = \delta_{is} \delta_{jt} E_{rk} - \delta_{ks} \delta_{jr} E_{ti},
\qquad  \widetilde{t}^\sigma(E_{ri},E_{sj}) = -\sigma\delta_{is}\delta_{rj}.
\end{equation}
(Although it is easy to show that $\widetilde{t}$ is a metric, that also follows from isomorphism \eqref{decomp.antipair.Imn} below.)
\\ $\bullet$ More in particular, for the antipair $\cA = \cA^{\text{(I)}}_{1,n} \equiv (\FF^n, \FF^n)$ we can identify both bases $\{ E_{1i} \}$ of $\cA^+$ and $\{ E_{i1} \}$ of $\cA^-$ with the canonical basis $\{ e_i \}$ of $\FF^n$, and then the triple products and metric can be written as
\begin{equation} \label{ctes.base.ei.antipar.I}
\{ e_i, e_j, e_k \}^\sigma = \sigma (\delta_{ij} e_k - \delta_{kj} e_i),
\qquad  \widetilde{t}^\sigma(e_i, e_j) = -\sigma \delta_{ij}.
\end{equation}

\item[3)]
Recall that the metrics are always required to satisfy the rule of signs (which is used in the Faulkner construction, and therefore to define the triple products of tensor products of superpairs); in particular, we have $\langle y,x \rangle^- = \langle x,y \rangle^+$ for pairs (even superpairs), and $\langle y,x \rangle^- = -\langle x,y \rangle^+$ for antipairs (odd superpairs).

By \eqref{ctes.base.ei.par.I}, it is straightforward to see that for $\alpha = (-2, \bar 1)$, the antipair $(\cV^{\text{(I)}}_{1,n}, t)^{[\alpha]}$ has operations
\begin{align*}
\{e_i,e_j,e_k\}^\sigma_{[\alpha]} = -\sigma(\delta_{ij}e_k - \delta_{kj}e_i), \qquad
t^\sigma_{[\alpha]}(e_i,e_j) = \sigma\delta_{ij}.
\end{align*}
Now assume that there exists a square root $\bi$ of $-1$ in $\FF$. Then, by \eqref{ctes.base.ei.antipar.I}, it is easy to see that $\bi \id$ defines an isomorphism of metric antipairs
\begin{equation} \label{isom.superpairs.Imn}
(\cA^{\text{(I)}}_{1,n}, \widetilde{t}) \cong (\cV^{\text{(I)}}_{1,n}, t)^{[(-2, \bar 1)]},
\end{equation}
or equivalently, $(\cV^{\text{(I)}}_{1,n}, t) \cong (\cA^{\text{(I)}}_{1,n}, \widetilde{t})^{[(2, \bar 1)]}$. Therefore, $(\cV^{\text{(I)}}_{1,n}, t)$ and $(\cA^{\text{(I)}}_{1,n}, \widetilde{t})$ are tensor-shift related. Unfortunately, $(\cV^{\text{(I)}}_{m,n}, t)$ and $(\cA^{\text{(I)}}_{m,n}, \widetilde{t})$ do not seem to be tensor-shift related in general. However, we claim that we can decompose
\begin{equation} \label{decomp.antipair.Imn}
(\cA^{\text{(I)}}_{m,n}, \widetilde{t}) \cong
\big( (\cV^{\text{(I)}}_{1,m}, -t) \otimes (\cV^{\text{(I)}}_{1,n}, t) \big)^{[(0,\bar1)]}.
\end{equation}
To prove the isomorphism, first note that the triple products of
$\cV = (\cV^{\text{(I)}}_{1,m}, -t) \otimes (\cV^{\text{(I)}}_{1,n}, t)$
are given by
\begin{align*}
& \{ e_r \otimes e_i, e_s \otimes e_j, e_t \otimes e_k \}^\sigma
= \{ e_r,e_s,e_t \} \otimes \langle e_i,e_j \rangle e_k
  + \langle e_r,e_s \rangle e_t \otimes \{ e_i,e_j,e_k \} \\
& \qquad = (\delta_{rs}e_t + \delta_{ts}e_r) \otimes \delta_{ij}e_k
  - \delta_{rs}e_t \otimes (\delta_{ij}e_k + \delta_{kj}e_i) \\
& \qquad = \delta_{ts}\delta_{ij} e_r \otimes e_k - \delta_{rs}\delta_{kj} e_t \otimes e_i.
\end{align*}
Denote $\varepsilon_{ij}^+ := e_i \otimes e_j \in \cV^+$, $\varepsilon_{ij}^- := e_j \otimes e_i \in \cV^-$.
Then the calculations above show that
$$ \{ \varepsilon_{ri}^\sigma, \varepsilon_{sj}^{-\sigma}, \varepsilon_{tk}^\sigma \}^\sigma
= \sigma (\delta_{is}\delta_{jt}\varepsilon_{rk}^\sigma
  - \delta_{ks}\delta_{jr}\varepsilon_{ti}^\sigma). $$
The metric of $\cV$ is the tensor product $(-t) \otimes t$, which is given by
$$ \langle \varepsilon_{ri}^\sigma, \varepsilon_{sj}^{-\sigma} \rangle^\sigma
= -\delta_{is}\delta_{rj}. $$
It follows that the tensor-shift $\cV^{[(0,\bar1)]}$ is determined by
\begin{align*}
\{ \varepsilon_{ri}^\sigma, \varepsilon_{sj}^{-\sigma}, \varepsilon_{tk}^\sigma \}^\sigma
= \delta_{is}\delta_{jt}\varepsilon_{rk}^\sigma
  - \delta_{ks}\delta_{jr}\varepsilon_{ti}^\sigma, \qquad
\langle \varepsilon_{ri}^\sigma, \varepsilon_{sj}^{-\sigma} \rangle^\sigma
= -\sigma\delta_{is}\delta_{rj},
\end{align*}
and then it is clear that the maps $\varepsilon_{ij}^\sigma \mapsto E_{ij}^\sigma$ define an isomorphism as in \eqref{decomp.antipair.Imn}.
\end{itemize}
\end{examples}

\begin{example} \label{Examples2} \; \\
Let $b \colon \FF^n \times \FF^n \to \FF$ be the standard scalar product and $\cV_n^\text{(IV)} := (\FF^n, \FF^n)$ the simple Jordan pair of type IV associated to $b$. Also consider the canonical basis $\{e_i\}$ of $\FF^n$.

On the other hand, consider the structurable algebra $\cA := \cM_n(\FF)$ with the transposition involution, and let $\cV_\cA$ be the associated Kantor pair. Consider the bilinear form $t \colon \cA \times \cA \to \FF$, $t(x,y) := t(xy^{\tr})$.

As in Example~\ref{Examples1}-1), let $(\cV_{1,n}^\text{(I)}, t)$ denote the Jordan pair of type $\text{I}_{1,n}$, with its generic trace $t$ (which is a metric), and its canonical basis $\{e_i\}$.

Finally, recall from \cite[Example~4.8]{A22} that both $(\cV_n^\text{(IV)}, b)$ and $(\cV_\cA, t)$ are metric generalized Jordan pairs too, and we have an isomorphism of metric generalized Jordan pairs
\begin{align}
\Big( (\cV_{1,n}^\text{(I)}, t)
\otimes (\cV_n^\text{(IV)}, b) \Big)^{[-2]}
\longrightarrow (\cV_\cA, t),
\qquad e_i \otimes e_j \longmapsto E_{ij},
\end{align}
where the first tensor product is tensor-shifted by $\alpha = (-2, \bar0) \equiv -2$. Recall that tensor-shifts preserve the automorphism group schemes with metric, that is, $\bAut\Big((\cV, t)^{[\alpha]}\Big) = \bAut(\cV, t)$.

Of course, this tensor product decomposition is related to the decomposition of $\bAut(\cV_\cA, t) = \bAut(\cV_\cA) \simeq \bG_m(\cA) \otimes_{\bG_m} \bSim(\cA, \tr) = \bGL_n \otimes_{\bG_m} \bGO_n$ (see Theorem~\ref{automorphismschemes}), which is a central product of the group schemes 
$\bAut(\cV_{1,n}^\text{(I)}, t) = \bAut(\cV_{1,n}^\text{(I)}) \simeq \bGL_n$ (see \cite[Proposition~4.7]{AD25})
and
$\bAut(\cV_n^\text{(IV)}, b) = \bAut(\cV_n^\text{(IV)}) \simeq \bGO_n$ (see \cite[Proposition~3.3]{AD25}).
\end{example}

\begin{examples} \label{Examples3} \;
\begin{itemize}
\item[1)]
Recall from \cite[Example~1.2.11]{B08} that the symplectic anti-Jordan pair of dimension $2n$ is given by $\cA^{\text{(III)}}_{2n} := (T, T)$ where $T$ is a vector space of dimension $2n$, and the triple products are
\begin{align}
\{x,y,z\}^\sigma := b(x,y)z + b(y,z)x + b(x,z)y,
\end{align}
where $b \colon T \times T \to \FF$ is a non-degenerate alternating bilinear form. We will regard $T$ as an odd vector superspace, which means that $b$ is supersymmetric (in the sense of vector superspaces).

We claim that $b^\sigma := b$ (for $\sigma = \pm$) is a metric of $\cA^{\text{(III)}}_{2n}$. Note that this satisfies the required rule of signs $b^+(x^+,y^-) = \eta_{x^+,y^-} b^-(y^-,x^+)$. To prove the claim, we need to prove that $b$ is a non-degenerate even superinvariant supersymmetric bilinear form. It is clear that $b$ is non-degenerate and even. For $x,y,z,w \in T$, we have
\begin{align*}
b(D_{x,y}z,w)&= b\big( b(x,y)z + b(y,z)x + b(x,z)y, w \big) \\
&= -b(y,x)b(z,w) - b(z,y)b(x,w) - b(z,x)b(y,w) \\
&= -b\big( z, b(y,x)w + b(x,w)y + b(y,w)x \big)
= \eta_{x,y,z} b(z, D_{y,x}w),
\end{align*}
so $b$ is superinvariant. Also,
\begin{align*}
b(D_{x,y}z ,w)&= b\big( b(x,y)z + b(y,z)x + b(x,z)y, w \big) \\
&= b(x,y)b(z,w) + b(z,y)b(w,x) + b(z,x)b(w,y) \\
&= b(b(z,w)x + b(w,x)z + b(z,x)w,y)
= \eta_{D_{x,y},D_{z,w}} b(D_{z,w}x,y),
\end{align*}
and since $b$ is superinvariant it follows that $b$ is supersymmetric. We have proven the claim.

Note that we can fix a hyperbolic basis $\{x_i\}_{i=1}^{2n}$ of $T$, i.e., such that $b(x_i,x_j) = \Omega_{ij}$, where
\begin{align} \label{omegaMatrix}
\Omega = (\Omega_{ij}) := \left( \begin{array}{c|c} 0 & \mathrm{I}_n \\
\hline -\mathrm{I}_n & 0\end{array} \right).
\end{align}
Then the triple products of $\cA^{\text{(III)}}_{2n}$ are given by
\begin{align}
\{ x_i, x_j, x_k \}^\sigma = \Omega_{ij} x_k + \Omega_{jk} x_i + \Omega_{ik} x_j.
\end{align}

\item[2)]
The triple products of the antipair $(\cV^{\text{(I)}}_{1,2n}, t)  \otimes (\cA^{\text{(III)}}_{2n}, -b)$ are
\begin{align*}
\{ e_r & \otimes x_i , e_s \otimes x_j, e_t \otimes x_k \}^\sigma = \\
&= \eta_{ x_i, e_s } \Big( \{ e_r, e_s, e_t \}^\sigma \otimes \langle x_i, x_j \rangle x_k
+ \eta_{e_t, x_i} \eta_{e_t,x_j} \langle e_r, e_s \rangle e_t \otimes \{ x_i, x_j, x_k \}^\sigma \Big) \\
&= (\delta_{rs} e_t + \delta_{ts} e_r) \otimes (-\Omega_{ij}) x_k
+ \delta_{rs} e_t \otimes (\Omega_{ij} x_k + \Omega_{jk} x_i + \Omega_{ik} x_j)\\
&= \boxed{  \Omega_{jk} \delta_{rs} e_t \otimes x_i
+ \Omega_{ik} \delta_{rs} e_t \otimes x_j
- \Omega_{ij} \delta_{st} e_r \otimes x_k  },
\end{align*}
and the metric is
$$ \langle e_r \otimes x_i , e_s \otimes x_j \rangle^\sigma
= \langle e_r,e_s \rangle^\sigma \langle x_i,x_j \rangle^\sigma
= \boxed{ -\delta_{rs}\Omega_{ij} }. $$
$\bullet$ Let $\alpha = (0, \bar 1) \in \FF \times \ZZ_2$. Then the tensor-shift (a parity-flip) $\cV := \Big( (\cV^{\text{(I)}}_{1,2n}, t)  \otimes (\cA^{\text{(III)}}_{2n}, -b) \Big)^{[\alpha]}$ is a pair (even superpair) with metric and triple products given by
\begin{align}
\langle e_r \otimes x_i , e_s \otimes x_j \rangle^\sigma
&= \boxed{ \sigma \delta_{rs}\Omega_{ij} }, \\
\{ e_r \otimes x_i , e_s \otimes x_j, e_t \otimes x_k \}^{\sigma}
&= \boxed{
\sigma \big( \Omega_{kj} \delta_{rs} e_t \otimes x_i
+ \Omega_{ki} \delta_{rs} e_t \otimes x_j
+ \Omega_{ij} \delta_{st} e_r \otimes x_k \big)
}.
\end{align}

\item[3)]
Recall that the standard symplectic involution of $\cM_{2n}(\FF)$ is given by
\begin{align}
x = \left( \begin{array}{c|c} A & B \\
\hline C & D \end{array} \right) \longmapsto
\overline{x} := \Omega x^t \Omega^{-1} = \left( \begin{array}{c|c} D^t & -B^t \\
\hline -C^t & A^t \end{array} \right),
\end{align}
for each $x \in \cM_{2n}(\FF)$, where $A,B,C,D \in \cM_n(\FF)$, where $\Omega$ is defined as in \eqref{omegaMatrix}, and where $x^t$ denotes the transpose of $x$.
For the canonical basis $\{ E_{ij} \}_{ij}$ of $\cM_n(\FF)$, the involution is given by
\begin{align*}
\overline{E_{ij}} = \left\lbrace \begin{array}{l}
E_{j+n,i+n}, \text{if } 1\leq i,j \leq n, \\
E_{j-n,i-n}, \text{if } n < i,j \leq 2n, \\
-E_{j-n,i+n}, \text{if } 1\leq i \leq n < j \leq 2n, \\
-E_{j+n,i-n}, \text{if } 1\leq j \leq n < i \leq 2n.
\end{array} \right.
\end{align*}
This is, $\overline{E_{ij}} = \gamma_{ij} E_{[j+n][i+n]}$, where $\gamma_{ij}:= \Omega_{i[i+n]} \Omega_{j[j+n]}$ and $[i]=i \text{ mod } 2n$. We also have
\begin{align*}
E_{ri} \overline{E_{sj}} E_{tk} = E_{ri} \gamma_{sj} E_{[j+n][s+n]} E_{tk}
= \gamma_{sj} \delta_{i [j+n]} \delta_{[s+n] t} E_{rk}
\end{align*}
Let $\cW = (\cM_{2n}(\FF), \cM_{2n}(\FF))$ be the Kantor pair associated to the structurable algebra $(\cM_{2n}(\FF), \inv)$. The triple products of $\cW$ are given by
\begin{align*}
\{ E_{ri}, E_{sj}, E_{tk} \}
&= (E_{ri} \overline{E_{sj}}) E_{tk} + (E_{tk} \overline{E_{sj}}) E_{ri} - (E_{tk} \overline{E_{ri}}) E_{sj}\\
&= \boxed{
\gamma_{sj} \delta_{i [j+n]} \delta_{[s+n] t} E_{rk}
+ \gamma_{sj} \delta_{k [j+n]} \delta_{[s+n] r} E_{ti}
-\gamma_{ri} \delta_{k [i+n]} \delta_{[r+n] s} E_{tj}
}
\end{align*}
for $i,j,k,r,s,t \in \{ 1,..., 2n \}$.
Denote
\begin{align*}
\phi_i :=
\begin{cases}
-1, \quad \text{if $i \in \{1,\ldots n\}$}, \\
+1, \quad \text{if $i \in \{n+1,\ldots 2n\}$},
\end{cases}
\end{align*}
and note that we have the properties
\begin{align*}
\Omega_{ij} = \delta_{[i+n]j} \phi_j, \qquad
\gamma_{ik} \gamma_{jk} = \gamma_{ij} = \phi_i \phi_j, \qquad
\phi_{[i+n]} = - \phi_i, \qquad \delta_{ij} = \delta_{[i+n][j+n]}.
\end{align*}
Denote $\widetilde{E}_{ij} := \phi_i E_{[i+n]j}$.

Our final goal now is to prove the isomorphism $\cW \cong \cV$, with $\cV$ defined as in 2); unfortunately, we will need a square root of $-1$ in $\FF$ to define the isomorphism. First note that
\begin{align*}
\{ E_{ri}, \widetilde{E}_{sj}, E_{tk} \}^+
&= \phi_s \{ E_{ri}, E_{[s+n]j}, E_{tk} \} \\
&= \phi_s \gamma_{[s+n]j} \delta_{rs} \delta_{k[j+n]} E_{ti}
-\phi_s \gamma_{ri} \delta_{rs} \delta_{k[i+n]} E_{tj}
+\phi_s \gamma_{[s+n]j} \delta_{st} \delta_{i[j+n]} E_{rk} \\
&= (\delta_{[j+n]k} \phi_k) \phi_k \phi_s (-\phi_s) \phi_j \delta_{rs} E_{ti} \\
&\quad - (\delta_{[i+n]k} \phi_k) \phi_k \phi_i (\phi_r \phi_s \delta_{rs}) E_{tj} \\
&\quad + (\delta_{[i+n]j} \phi_j) (\phi_s \phi_{[s+n]}) \delta_{st} E_{rk} \\
&= \Omega_{jk} (-\phi_j\phi_k) \delta_{rs} E_{ti}
- (\Omega_{ik} \phi_i \phi_k)  \delta_{rs} E_{tj}
- \Omega_{ij} \delta_{st} E_{rk} \\
&= \boxed{
\Omega_{jk} \delta_{rs} E_{ti}
+ \Omega_{ik} \delta_{rs} E_{tj}
+ \Omega_{ji} \delta_{st} E_{rk}
}
\end{align*}
\begin{align*}
\{ \widetilde{E}_{ri}, E_{sj}, \widetilde{E}_{tk} \}^-
&= \phi_r \phi_t \{ E_{[r+n]i}, E_{sj}, E_{[t+n]k} \} \\
&= \phi_r \phi_t \gamma_{sj} \delta_{k[j+n]} \delta_{rs} E_{[t+n]i}
- \phi_r \phi_t \gamma_{[r+n]i} \delta_{k[i+n]} \delta_{rs} E_{[t+n]j}
+ \phi_r \phi_t \gamma_{sj} \delta_{i[j+n]} \delta_{st} E_{[r+n]k} \\
&= (\delta_{[k+n]j} \phi_j) (\phi_r\phi_s\delta_{rs}) (\phi_t E_{[t+n]i}) \\
&\quad - (\delta_{[k+n]i} \phi_i) \phi_r (-\phi_{r}) \delta_{rs} (\phi_t E_{[t+n]j}) \\
&\quad + (\delta_{[i+n]j} \phi_j) (\phi_s \phi_t \delta_{st}) (\phi_r E_{[r+n]k}) \\
&= \boxed{
\Omega_{kj} \delta_{rs} \widetilde{E}_{ti}
+ \Omega_{ki} \delta_{rs} \widetilde{E}_{tj}
+ \Omega_{ij} \delta_{st} \widetilde{E}_{rk}
}
\end{align*}
Denote $\cE_{ij}^+ := E_{ij}$, $\cE_{ij}^- := \widetilde{E}_{ij}$. Then the triple products are given by
\begin{align}
\{ \cE^\sigma_{ri}, \cE^{-\sigma}_{sj}, \cE^\sigma_{tk} \}^\sigma
= \boxed{
\sigma \big( \Omega_{jk} \delta_{rs} \cE^\sigma_{ti}
+ \Omega_{ik} \delta_{rs} \cE^\sigma_{tj}
+ \Omega_{ji} \delta_{st} \cE^\sigma_{rk} \big)
}.
\end{align}

$\blacksquare$ Finally, assume that there is an element $\bi \in \FF$ such that $\bi^2 = -1$. From the computations above, it is easy to see that we have an isomorphism of metric pairs
\begin{align}
\varphi \colon (\cV, \langle\cdot,\cdot\rangle) \longrightarrow (\cW, \langle\cdot,\cdot\rangle), \qquad
\varphi^\sigma(e_i \otimes x_j) := \bi \cE^\sigma_{ij},
\end{align}
where the metric in $\cW$ is given by
\begin{align}
\langle \cE^\sigma_{ri}, \cE^{-\sigma}_{sj} \rangle^\sigma := -\sigma \delta_{rs} \Omega_{ij}.
\end{align}
We have proven the decomposition:
\begin{align}
(\cW, \langle\cdot,\cdot\rangle) \cong 
\big( (\cV^{\text{(I)}}_{1,2n}, t) \otimes (\cA^{\text{(III)}}_{2n}, -b) \big)^{[(0,\bar 1)]}.
\end{align}

From the isomorphisms above (and by the properties of tensor-shifts) we also get the following decomposition of $(\cW, \langle\cdot,\cdot\rangle)$ using two antipairs as factors:
\begin{align}
(\cW, \langle\cdot,\cdot\rangle) \cong \big( (\cA^{\text{(I)}}_{1,2n}, \widetilde{t})
\otimes (\cA^{\text{(III)}}_{2n}, -b) \big)^{[(2,\bar 0)]}.
\end{align}
Indeed,
\begin{align*}
(\cW, \langle\cdot,\cdot\rangle) &\cong (\cV, \langle\cdot,\cdot\rangle) =
\big( (\cV^{\text{(I)}}_{1,2n}, t) \otimes (\cA^{\text{(III)}}_{2n}, -b) \big)^{[(0,\bar 1)]}
\cong \big( (\cA^{\text{(I)}}_{1,2n}, \widetilde{t})^{[(2,\bar 1)]}
\otimes (\cA^{\text{(III)}}_{2n}, -b) \big)^{[(0,\bar 1)]} \\
&\cong \big( (\cA^{\text{(I)}}_{1,2n}, \widetilde{t})
\otimes (\cA^{\text{(III)}}_{2n}, -b) \big)^{[(2,\bar 1) + (0,\bar 1)]}
\cong \big( (\cA^{\text{(I)}}_{1,2n}, \widetilde{t})
\otimes (\cA^{\text{(III)}}_{2n}, -b) \big)^{[(2,\bar 0)]}.
\end{align*}
Of course, both decompositions are related to the decomposition of $\bAut(\cW, \langle\cdot,\cdot\rangle)$ as a central product of the automorphism group schemes of the factors of $(\cW, \langle\cdot,\cdot\rangle)$.
\end{itemize}
\end{examples}

\begin{remark}
For the Kantor pair in the unitary case, i.e., $\cV = \cV_\cA$ with $\cA = \cM_n(\FF) \oplus \cM_n(\FF)^{op}$ and the exchange involution, it can be checked that a metric is given by the map $t \colon \cA \times \cA \to \FF$, where
\begin{equation}
t(x,y) := t(x_1y_2) + t(x_2y_1).
\end{equation}
Also, Lemma~\ref{lemma.AutKantor} shows that $\bAut(\cV,t) = \bAut(\cV)$. However, in this case, a decomposition as a tensor product of metric generalized Jordan pairs has not been found by the authors, and it might not exist.
\end{remark}

		

	\end{document}